\let\oldequation\equation
\let\oldendequation\endequation
\renewenvironment{equation}{\linenomathNonumbers\oldequation}{\oldendequation\endlinenomath}
\let\expandafter\oldequationstar\csname equation*\endcsname
\let\expandafter\oldendequationstar\csname endequation*\endcsname
\renewenvironment{equation*}{\linenomathNonumbers\oldequationstar}{\oldendequationstar\endlinenomath}
\let\oldalign\align
\let\oldendalign\endalign
\let\expandafter\oldalignstar\csname align*\endcsname
\let\expandafter\oldendalignstar\csname endalign*\endcsname
\renewenvironment{align*}{\linenomathNonumbers\oldalignstar}{\oldendalignstar\endlinenomath}
\newcommand{\xra}{\xrightarrow}
\newcommand{\del}{\partial}
\newcommand{\dcat}[2][]{\cat{D}_{#1}(#2)}
\newcommand{\dfcat}[2][]{\cat{D}^{\sf{f}}_{#1}(#2)}
\newcommand{\dfbcat}[1]{\dfcat[\sf{b}]{#1}}
\newcommand{\dfpcat}[1]{\dfcat[+]{#1}}
\newcommand{\dfmcat}[1]{\dfcat[-]{#1}}
\newcommand{\drel}[2]{\cat{D_b}({#1}/{#2})} 
\newcommand{\perf}{\mathsf{Perf}}
\newcommand{\cob}[1]{\mathcal{#1}}
 \newcommand{\mcV}{\mathcal{V}}
\newcommand{\fm}{\mathfrak{m}}
\newcommand{\fp}{\mathfrak{p}}
\newcommand{\bmf}{\bm{f}}
\DeclareMathOperator{\V}{V}
\DeclareMathOperator{\ann}{ann}
\newcommand{\cat}[1]{{\mathsf{#1}}}
\newcommand{\env}{\mathsf{e}}
\DeclareMathOperator{\Ext}{Ext}
\newcommand{\builds}[1][]{\mathrel{\mathop{\models}\limits_{\vbox to 0.5em{\kern-3\ex@\hbox{$\scriptstyle\,\,#1$}\vss}}}}
\newcommand{\h}{\operatorname{H}}
\newcommand{\bgg}{\mathsf{d}}
\newcommand{\bg}{\mathsf{b}}
\newcommand{\univ}{\mathsf{u}}
\DeclareMathOperator{\Hom}{Hom}
\DeclareMathOperator{\Image}{Im}
\newcommand{\susp}{{\mathtt{\Sigma}}}
\newcommand{\lotimes}{\otimes^{\operatorname{L}}}
\newcommand{\proj}{\operatorname{Proj}}
\newcommand{\join}{\operatorname{Join}}
\newcommand{\RHom}{\operatorname{RHom}}
\newcommand{\shift}{{\sf\Sigma}}
 \newcommand{\Sym}{\operatorname{Sym}}
\newcommand{\supp}{\operatorname{Supp}}
\newcommand{\ssupp}{\operatorname{supp}}
\DeclareMathOperator{\Tor}{Tor}
\newcommand{\coloneq}{\colonequals}
\newtheorem*{introthm}{Theorem}
\newtheorem{theorem}[subsection]{Theorem}
\newtheorem{proposition}[subsection]{Proposition}
\newtheorem{lemma}[subsection]{Lemma}
\newtheorem{corollary}[subsection]{Corollary}
\theoremstyle{definition}
\newtheorem{chunk}[subsection]{}
\theoremstyle{remark}
\newtheorem{remark}[subsection]{Remark}
\newtheorem*{ack}{Acknowledgements}
\newtheorem{notation}[subsection]{Notation}
\numberwithin{equation}{subsection}
\title[Tensor products]{Cohomological supports of tensor products of \\ modules over commutative rings}
\author[S.B.~Iyengar]{Srikanth~B.~Iyengar}
\address{Srikanth~B.~Iyengar,
Department of Mathematics,
University of Utah,
Salt Lake City, UT 84112,
U.S.A.}
\email{iyengar@math.utah.edu}
\author[J.~Pollitz]{Josh Pollitz}
\address{Josh Pollitz,
Department of Mathematics,
University of Utah,
Salt Lake City, UT 84112,
U.S.A.}
\email{pollitz@math.utah.edu}
\author[W.~T.~Sanders]{William~T.~Sanders}
\address{William~T.~Sanders,
IOTA Foundation,
Pappelallee 78/79,
10437 Berlin, Germany}
\email{william.sanders@iota.org}
\date{13 March 2022}
\keywords{Koszul complex, dg modules, cohomological support, tensor products, join, BGG correspondence}
\subjclass[2020]{13D02, 16E45 (primary); 13D07, 13H10 (secondary)}
\begin{document}

\begin{abstract}
This works concerns cohomological support varieties of modules over commutative  local rings. The main result is that the  support of a derived tensor product of a pair of differential graded modules over a Koszul complex is the join  of  the supports of the modules. This generalizes, and gives another proof of, a result of Dao and the third author dealing with Tor-independent modules over complete intersection rings. The result for Koszul complexes has a broader applicability, including to exterior algebras over local rings.
\end{abstract}

\maketitle

%%%%%%%%%%%%%%%%%%%%%%%%%%%%%%%%%%%%%%%
\section*{Introduction}
%%%%%%%%%%%%%%%%%%%%%%%%%%%%%%%%%%%%%%%

 Throughout we fix a Koszul complex $E$ over a (commutative noetherian) local ring $(R,\fm,k)$  on a list of elements $\bmf=f_1,\ldots,f_c$ in $\fm. $  As explained in \cite{Pollitz:2021} studying the homological properties of differential graded (abbreviated to dg) $E$-modules  allows one to unify and extend the results about quotients $R\to R/(\bmf)$ when $\bmf$ is an $R$-regular sequence as well as those about exterior algebras over $R$. The dg $E$-modules perfect when regarded as $R$-complexes---in the sense that they are quasi-isomorphic to a bounded complex of finite rank free $R$-modules---are the ones that exhibit especially structured homological phenomena; see, for example, \cite{Avramov:1989,Avramov/Buchweitz:2000b,Avramov/Gasharov/Peeva:1997,Avramov/Sun:1998,Briggs/McCormick/Pollitz:2022,Eisenbud:1980,Gulliksen:1974,Stevenson:2014}. The homological properties of such a dg $E$-module $M$ are often encoded in its cohomological support, denoted $\V_E(M)$, which is a naturally associated Zariski closed subset of $\mathbb{P}_k^{c-1};$ cf.\@ \cref{c:univ2}.

The main result of this article is the following. 
 \begin{introthm}
 For dg $E$-modules $M,N$ that are perfect over $R$ there is an equality
 \[
 \V_E(M\lotimes_E N)=\join(\V_E(M),\V_E(N))\,.
 \]
 \end{introthm}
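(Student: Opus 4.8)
The plan is to reduce the statement to a computation over the exterior algebra $\Lambda \coloneq E \otimes_R k$ on the classes $e_1, \dots, e_c$, and from there to an assertion about supports of tensor products of modules over a polynomial ring. For a dg $E$-module $X$ that is perfect over $R$, the derived fibre $\overline{X} \coloneq X \lotimes_E \Lambda \simeq X \lotimes_R k$ is a dg $\Lambda$-module that is perfect over $k$, and---this should follow from the groundwork preceding \cref{c:univ2}, since the cohomology operators for the Koszul complex $E$ are the images of those for $\Lambda$---one has $\V_E(X) = \V_\Lambda(\overline{X})$: cohomological support of perfect-over-$R$ dg modules sees only the fibre. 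Reassociating derived tensor products gives $\overline{M \lotimes_E N} \simeq \overline{M} \lotimes_\Lambda \overline{N}$ (a purely formal identity, needing no flatness). Thus it suffices to prove
\[
\V_\Lambda(\overline{M} \lotimes_\Lambda \overline{N}) = \join(\V_\Lambda(\overline{M}),\V_\Lambda(\overline{N}))
\]
for dg $\Lambda$-modules $\overline{M},\overline{N}$ that are perfect over $k$.

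Over $\Lambda$ I would translate cohomological support into commutative algebra. Write $\mathcal{S} \coloneq \Ext_\Lambda^*(k,k)$; by Koszul duality $\mathcal{S}$ is a polynomial ring on classes $\chi_1, \dots, \chi_c$, with $\proj \mathcal{S} = \mathbb{P}_k^{c-1}$, and $\V_\Lambda(Y)$ is the closed subset of $\proj \mathcal{S}$ cut out by the annihilator of the graded $\mathcal{S}$-module $\Ext_\Lambda^*(Y,k)$. The key structural input is a Künneth formula that keeps track of the $\mathcal{S}$-action. Since $\overline{N} \lotimes_\Lambda k$ is a complex of $k$-vector spaces, the projection formula gives $\overline{M} \lotimes_\Lambda \overline{N} \lotimes_\Lambda k \simeq (\overline{M} \lotimes_\Lambda k) \otimes_k (\overline{N} \lotimes_\Lambda k)$, and dualising over $k$ (all graded pieces being finite-dimensional) yields an isomorphism of graded $k$-vector spaces
\[
\Ext_\Lambda^*(\overline{M} \lotimes_\Lambda \overline{N},k) \;\cong\; \Ext_\Lambda^*(\overline{M},k) \otimes_k \Ext_\Lambda^*(\overline{N},k).
\]
Tracing the Yoneda action of $\mathcal{S} = \Ext_\Lambda^*(k,k)$ through this isomorphism---using that $\Lambda$ is a cocommutative Hopf algebra with the $e_i$ primitive, equivalently that $\mathcal{S}$ carries the additive coproduct $\chi_i \mapsto \chi_i \otimes 1 + 1 \otimes \chi_i$---one finds that $\chi_i$ acts on the right-hand side as $\chi_i \otimes 1 + 1 \otimes \chi_i$; that is, $\Ext_\Lambda^*(\overline{M} \lotimes_\Lambda \overline{N},k)$ is the tensor product of $\Ext_\Lambda^*(\overline{M},k)$ and $\Ext_\Lambda^*(\overline{N},k)$ as modules over the Hopf algebra $\mathcal{S}$.

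It then remains to invoke a purely commutative-algebra lemma and assemble the pieces: for graded modules $A$ and $B$ over a polynomial ring $\mathcal{S} = k[\chi_1, \dots, \chi_c]$, the $\mathcal{S}$-module $A \otimes_k B$ on which each $\chi_i$ acts as $\chi_i \otimes 1 + 1 \otimes \chi_i$ has annihilator cutting out the closure of the Minkowski sum $\V(\ann_{\mathcal{S}} A) + \V(\ann_{\mathcal{S}} B)$ inside $\spec \mathcal{S} = \mathbb{A}_k^c$; projectivising turns the sum of these affine cones into the join, and with the two previous steps this gives the theorem. I expect the principal obstacle to be the identification of the $\mathcal{S}$-module structure in the Künneth step: one must genuinely follow the cohomology operators through the adjunction and the projection formula and verify that it is the Hopf coproduct on $\mathcal{S}$ that governs the resulting action---this is exactly what produces a join rather than, say, an intersection---and one should beware of divided-power phenomena in positive characteristic, which is why the argument is routed through the polynomial ring $\Ext_\Lambda^*(k,k)$ rather than through $\Tor_*^\Lambda(k,k)$. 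A secondary point is the lemma itself: its nontrivial direction is that the support exhausts the Minkowski sum, which one extracts by putting finite prime filtrations on $A$ and $B$, reducing to the case where each is the coordinate ring of a conical subvariety, and computing the annihilator of the pushforward along the addition morphism $\mathbb{A}_k^c \times \mathbb{A}_k^c \to \mathbb{A}_k^c$; here one must also accommodate the fact that $\Ext_\Lambda^*(\overline{M} \lotimes_\Lambda \overline{N},k)$ need not be finitely generated over $\mathcal{S}$.
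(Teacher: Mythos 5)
Your outline is correct, and its skeleton coincides with the paper's: pass to the exterior fibre $\Lambda=k\otimes_RE$ via $k\lotimes_R-$ (your $\ov{(-)}$), identify $\V_E(X)=\V_\Lambda^\bgg(\ov X)$ (this is \cref{l:passage}), use $\ov{M\lotimes_EN}\simeq \ov M\lotimes_\Lambda \ov N$, and conclude with a join--support lemma over the polynomial ring $\cob S$ (\cref{l:support}). Where you genuinely diverge is in the key middle step. The paper proves the $\cob S$-module identification $\Ext_\Lambda^*(\ov M\lotimes_\Lambda \ov N,k)\cong \Ext_\Lambda^*(\ov M,k)\otimes_k\Ext_\Lambda^*(\ov N,k)$, with $\cob S$ acting through the coproduct \cref{polynomialdiagonal}, by an explicit comparison of universal semifree resolutions $\univ M\otimes_\Lambda\univ N\to \univ(M\otimes_\Lambda N)$ (\cref{p:derivedtensor}); you instead get the underlying K\"unneth isomorphism from the projection formula and propose to identify the Yoneda $\Ext_\Lambda^*(k,k)$-action via the Hopf structure. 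That route is viable---it is essentially the paper's own remark following \cref{t:mainresult} about an alternative proof through the Hopf algebra $\Ext_E^*(k,k)$, and your prior passage to $\Lambda$ conveniently sidesteps the divided-powers background that remark needs---but be aware that the statement you defer (that the action on the K\"unneth isomorphism is the coproduct action, not something else) is precisely the nontrivial content, and the cleanest way to ``trace the operators'' is through the explicit resolutions $\univ(-)$, i.e.\ the paper's argument. Two further points to nail down in a write-up: first, to invoke the join lemma you need $\Ext_\Lambda^*(\ov M,k)$ and $\Ext_\Lambda^*(\ov N,k)$ finitely generated over $\cob S$, which is where perfection of $M,N$ over $R$ enters (\cref{c:fg}, or the BGG equivalences of \cref{c:bgg}); second, $\Ext_\Lambda^*(\ov M\lotimes_\Lambda \ov N,k)$ is in general not finitely generated over $\cob S$, so your annihilator-based reading of $\V_\Lambda$ must be reconciled with the paper's definition via support (\cref{c:univ2}); the paper handles exactly this inside \cref{l:support} using finite generation over $\cob S\otimes_k\cob S$, and your prime-filtration/addition-map argument should incorporate the same device rather than argue with annihilators alone.
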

 
The join of closed subsets $U,V$ of $\mathbb{P}_k^{c-1}$, denoted $\join(U,V)$, is the closure of the union  of lines connecting a point from $U$ to a point from $V$; see \cref{c:join} for details. Specializing the theorem above to the case where $R$ is a regular ring, $\bmf$ is an $R$-regular sequence, and $M,N$ are finitely generated $R/(\bmf)$-modules satisfying $\Tor^E_i(M,N)=0$ for all $i\ge 1$ recovers \cite[Theorem~3.1]{Dao/Sanders:2017}.  The proof in \emph{loc.\@ cit.\@} involves a series reductions and ad hoc geometric arguments. Besides  generalizing this result, a main point of this article is to offer a simpler proof by a passage to an exterior algebra, as briefly described below. 

The theorem above is proved in Section~\ref{s:tp-II}.  As a corollary, we deduce that when R is Gorenstein and  $\RHom_E(M,N)$ is perfect as an $R$-complex, there is an equality
\[
\V_E(\RHom_E(M,N)) = \join(\V_E(M),\V_E(N))\,.
\]
This is Corollary~\ref{cor:rhom} and it generalizes \cite[Theorem~4.7]{Dao/Sanders:2017}. Theorem~\ref{t:second} relates the support of the dg module $M\lotimes_EN$ to that of its homology modules, namely, $\Tor^E_i(M,N)$, thereby providing a positive answer to \cite[Question~2]{Dao/Sanders:2017}.

The key ingredient in our work is a functor, denoted $\mathsf{t}$, from the derived category of dg $E$-modules $\dcat{E}$ to the derived category of dg $\Lambda$-modules $\dcat{\Lambda}$ where $\Lambda$ is an exterior algebra on $\shift k^c$; see \cref{s:passage}. The relevance of this functor arises from \cref{l:passage} which identifies $\V_E(M)$ with $\V_\Lambda(\mathsf{t} M)$, and that as dg $\Lambda$-modules 
\[
\mathsf{t}(M\lotimes_E N)\simeq \mathsf{t}M \lotimes_\Lambda \mathsf{t} N\,.
\]
The expression for $\V_E(M\lotimes_EN)$ in the theorem above is a consequence of \cref{p:derivedtensor} that asserts if $X,Y$ are dg $\Lambda$-modules with finite dimensional homology, then
\[
\V_\Lambda(X\lotimes_\Lambda Y)=\join(\V_\Lambda(X),\V_\Lambda(Y))\,. \tag{$\dagger$}
\]
This equality is in turn deduced using a contravariant version, from \cite{Avramov:2013}, of the Bernstein-Gelfand-Gelfand correspondence functor:
\[
\bgg\colon \dcat{\Lambda}\to\dcat{\cob S}\,,
\]
where $\cob S$ is the symmetric algebra on $\shift^{-2}k^c$. The main calculation in the proof of ($\dagger$) is the interaction between tensor products and the functor $\bgg$, namely: Given dg $\Lambda$-modules $X,Y$ with homology finite dimensional over $k$,  there is an isomorphism of dg $\cob S$-modules
\[
\bgg (X\lotimes_\Lambda Y)\simeq \bgg X\otimes_k \bgg Y\,,
\] 
where the right-hand side is regarded as a dg $\cob S$-module through a natural map of $k$-algebras $\Delta\colon \cob S\to \cob S\otimes_k \cob S$, which makes $\cob S$ into a Hopf algebra; see \cref{polynomialdiagonal}. Given this result, ($\dagger$) follows by a standard argument concerning supports of modules over polynomial rings, discussed in \cref{s:support}; see especially \cref{l:support}. The isomorphism above, which is folklore, is contained in \cref{p:derivedtensor}.

\begin{ack}
It is our pleasure to thank the referees for their comments and questions on an earlier version of this article. This work was partly supported by National Science Foundation DMS grants 2001368 (SBI) and 2002173 (JP).
\end{ack}

%%%%%%%%%%%%%%%%%%%%%%%%%%%%%%%%%%%%%%%
\section{Joins and supports}
\label{s:support}
%%%%%%%%%%%%%%%%%%%%%%%%%%%%%%%%%%%%%%%
Let $k$ be a field. In what follows we encounter graded $k$-vector spaces $W$ whose natural grading is lower and also those whose natural grading is upper. It is convenient to adopt the convention that $W$ has both an upper and a lower grading, with $W^i=W_{-i}$ for each integer $i$. We indicate the primary grading when necessary.

Fix a finite dimensional graded $k$-space $W\coloneq \{W^i\}_{i\in \mathbb{Z}}$ concentrated in positive even degrees. Let $\cob S\coloneq \Sym_k W$ be the symmetric algebra (over $k$) on $W$ and $\proj \cob S$ the set of homogeneous prime ideals of $\cob S$ not containing the irrelevant maximal ideal $\cob S^{>0}$ of $\cob S$, equipped with the Zariski topology. In this section we recall some basics on joins of closed subsets of $\proj{\cob S}$ and of supports of graded $\cob S$-modules. Our standard references are \cite[Section 1.3]{Flenner/O'Carroll/Vogel:1999}, for joins, and \cite{Foxby:1979}, for supports.

\begin{chunk}
\label{c:join}
The map $W\to W\oplus W$ given by $w\mapsto (w, 0)+(0,w)$ induces a map 
\begin{equation}
\label{polynomialdiagonal}
\Delta\colon \cob S\to \cob S\otimes_k \cob S
\end{equation} 
of graded $k$-algebras and makes $\cob S$ into a graded Hopf algebra over $k$. It also defines a rational map
\[
\delta\colon \proj (\cob S\otimes_k \cob S)\dashrightarrow\proj \cob S
\] 
that is defined (and regular) off of the anti-diagonal $D$ in $\proj (\cob S\otimes_k \cob S)$; here $D$ is the image of the embedding $\proj {\cob S}\hookrightarrow \proj (\cob S\otimes_k \cob S)$ determined by the map $W\oplus W\to W$ given by $(w_1,w_2)\mapsto w_1+w_2$.
	
Given closed subsets $U\coloneq \cob V(\cob I)$ and $V\coloneq \mcV(\cob J)$ of $\proj \cob S$,  consider 
\[
	J(U,V)\coloneq\proj(\cob S /\cob I\otimes_k \cob S /\mathcal{J})
\] 
viewed as a closed subset of $\proj(\cob S\otimes_k \cob S)$. The \emph{join} of $U$ and $V$, denoted $\join(U,V)$, is the closure in $\proj \cob S$ of the set	
\[
\delta\left( J(U,V)\smallsetminus D\right)\,.
\] 
When $k$ is algebraically closed, the Nullstellensatz identifies $\proj \cob S$ with projective space $\mathbb{P}^{d-1}_k$ where $d=\dim_k W.$ Under this identification, the join of $U$ and $V$ is the closure of the union of lines in $\proj \cob S$ containing a point $u$ in $U$ and a point $v$ in $V$.
\end{chunk}

\begin{remark}
The join can also be defined as follows: Consider the rational map 
\[
\delta'\colon \proj (\cob S\otimes_k \cob S)\dashrightarrow \proj \cob S\,,
\] that is regular of the diagonal in $\proj (\cob S\otimes_k \cob S)$, induced by the $k$-algebra map $\cob S\to \cob S\otimes_k \cob S$
determined by $w\mapsto w\otimes 1-1\otimes w.$ The linear automorphism $\alpha$ of  $\proj (\cob S\otimes_k \cob S)$ determined by 
\[
w\otimes 1\mapsto w\otimes 1\quad \text{and}\quad 1\otimes w\mapsto -1\otimes w
\]fixes $J(U,V)$ for any pair of closed subsets $U,V$ of $\proj \cob \cob S$, maps $D$ bijectively to $D'$ and $\delta=\delta'\alpha$.
Hence
\[
\delta(J(U,V)\smallsetminus D)=\delta'(J(U,V)\smallsetminus D')
\]
where the right-hand side is the definition of the join used in \cite[Section~1.3]{Flenner/O'Carroll/Vogel:1999}. That is the definitions of joins from \emph{loc.\@ cit.\@} and \cref{c:join} coincide. We opt for the latter as the isomorphism in \cref{p:derivedtensor} respects $\Delta$.
\end{remark}

\begin{chunk}
\label{c:support}
Let $X$ be a graded $\cob S$-module. The \emph{support of $X$} over $\cob S$ is the subset
\[
\supp_\cob S X\coloneq \{\fp\in \proj \cob S\mid X_\fp\neq 0\}\,,
\] 
where $X_\fp$ denotes the homogeneous localization of $X$ at $\fp$. Following Foxby~\cite{Foxby:1979}, the \emph{small support of $X$} is 
\[
\ssupp_\cob S X\coloneq \{\fp\in \proj \cob S\mid X\lotimes_S \kappa(\fp)\neq 0\}\,,
\]
where $\kappa(\fp)$ is the graded field $\cob S_\fp/\fp \cob S_\fp$. Consider the closed subset 
\[
\mcV(\ann_\cob S X) \coloneq \{\fp\in \proj \cob S\mid \fp\supseteq \ann_\cob S X\}
\]
of $\proj \cob S$. In general there are inclusions
\begin{equation}\label{e:containment}
\ssupp_\cob S X\subseteq \supp_\cob S X\subseteq \mcV(\ann_\cob S X)\,.
\end{equation}
Moreover, $\supp_\cob S X$ is the specialization closure of $\ssupp_\cob S X;$ see \cite[Lemma~2.2]{Benson/Iyengar/Krause:2008}.
Equalities hold when the $\cob S$-module $X$ is finitely generated.
\end{chunk}

\begin{lemma}
\label{l:support}
Let $X,Y$ be finitely generated graded $\cob S$-modules.	There is an equality 
\[
\supp_\cob S (X\otimes_k Y)=\join(\supp_\cob S X,\supp_\cob S Y)
\] 
where $X\otimes_k Y$ is regarded as a graded $\cob S$-module via \cref{polynomialdiagonal}. 
\end{lemma}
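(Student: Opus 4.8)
The plan is to reduce the statement to a computation with prime ideals in the tensor product ring $\cob S \otimes_k \cob S$, and then transport that computation to $\proj \cob S$ along the rational map $\delta$. First I would observe that since $X$ and $Y$ are finitely generated, all three notions of support in \cref{c:support} coincide, so it suffices to work with $\mcV(\ann_\cob S(-))$, or equivalently with the ordinary support. The key algebraic input is that for finitely generated modules, $\supp_{\cob S \otimes_k \cob S}(X \otimes_k Y) = \supp_\cob S X \times_{\proj} \supp_\cob S Y$ in the appropriate sense — more precisely, a homogeneous prime $\fq$ of $\cob S \otimes_k \cob S$ lies in $\supp(X \otimes_k Y)$ if and only if its two ``restrictions'' (the preimages of $\fq$ under the two coproduct inclusions $\cob S \to \cob S \otimes_k \cob S$) lie in $\supp_\cob S X$ and $\supp_\cob S Y$ respectively. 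This is the statement that $\V(\ann(X\otimes_k Y)) = \V(\ann X \otimes_k \cob S + \cob S \otimes_k \ann Y)$, which follows because $\ann_{\cob S\otimes_k\cob S}(X\otimes_k Y) \supseteq \ann X \otimes_k \cob S + \cob S \otimes_k \ann Y$ always, and has the same radical by a standard fiberwise/flat base change argument over the field $k$ (the vanishing of $(X\otimes_k Y)\otimes \kappa$ at a prime is detected by the vanishing of $X$ and $Y$ at the restricted primes, using that $k$ is a field so everything is flat).

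With that in hand, the set $J(\supp_\cob S X, \supp_\cob S Y) = \proj(\cob S/\cob I \otimes_k \cob S/\cob J)$ — where $\cob I, \cob J$ are the defining ideals of the two supports — is exactly $\supp_{\cob S \otimes_k \cob S}(X \otimes_k Y)$ as a closed subset of $\proj(\cob S \otimes_k \cob S)$. By definition, $\join(\supp_\cob S X, \supp_\cob S Y)$ is the closure in $\proj \cob S$ of $\delta\big(J(\supp_\cob S X, \supp_\cob S Y) \smallsetminus D\big)$. So what remains is to identify the closure of $\delta\big(\supp_{\cob S\otimes_k\cob S}(X\otimes_k Y) \smallsetminus D\big)$ with $\supp_\cob S(X \otimes_k Y)$, where on the right $X \otimes_k Y$ is viewed as an $\cob S$-module via $\Delta$. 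Here I would use that $\Delta\colon \cob S \to \cob S \otimes_k \cob S$ is, on the level of schemes, the map whose associated rational map is $\delta$ — that is, $\supp_\cob S$ of a module restricted along $\Delta$ is computed by pulling back along $\Delta$, equivalently pushing forward the support under $\delta$. Concretely, $\ann_\cob S(X\otimes_k Y) = \Delta^{-1}\big(\ann_{\cob S\otimes_k\cob S}(X\otimes_k Y)\big)$, so $\mcV(\ann_\cob S(X\otimes_k Y))$ is the closure of the image under $\Spec \Delta$ of $\mcV(\ann_{\cob S\otimes_k\cob S}(X\otimes_k Y))$; projectivizing and removing the locus where $\delta$ is undefined (the anti-diagonal $D$) does not change the closure of the image, since $D$ is nowhere dense in each component that meets it — or rather, one checks that $J(U,V) \cap D$ maps into the closure of $\delta(J(U,V)\smallsetminus D)$ anyway, so removing $D$ is harmless.

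The main obstacle I anticipate is the bookkeeping around the rational (not regular) map $\delta$ and the removal of the anti-diagonal $D$: one must be careful that the closure of $\delta(J(U,V) \smallsetminus D)$ genuinely equals the closure of the full image of $\Spec\Delta$ on $\mcV(\ann(X\otimes_k Y))$, i.e. that deleting $D$ and then taking closure recovers everything. This is where the hypothesis that $W$ is concentrated in \emph{positive even degrees} and the explicit form of $\Delta$ (sending $w \mapsto w\otimes 1 + 1 \otimes w$) matter: the graded structure ensures the relevant cones are honest affine cones and the projective closure behaves well, and the symmetry of $\Delta$ under the swap of the two factors, together with the fact that $D$ is the image of the ``addition'' section, lets one argue that any point of $J(U,V)\cap D$ is a specialization of points in $J(U,V)\smallsetminus D$ (for instance by scaling one factor), so it contributes nothing new to the closure. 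Once this point is settled, assembling the chain of equalities $\supp_\cob S(X\otimes_k Y) = \overline{\delta(J(\supp X,\supp Y)\smallsetminus D)} = \join(\supp_\cob S X, \supp_\cob S Y)$ is routine.
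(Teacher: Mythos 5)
Your reduction stops short of the set the lemma is actually about. The opening claim that the three notions of support in \cref{c:support} coincide applies to $X$ and $Y$, which are finitely generated over $\cob S$, but not to $X\otimes_k Y$ viewed as an $\cob S$-module through $\Delta$: that module is almost never finitely generated over $\cob S$ (already $X=Y=\cob S$ gives $\cob S\otimes_k\cob S$, which is not module-finite over the diagonally embedded $\cob S$). Consequently the inclusion $\supp_\cob S(X\otimes_kY)\subseteq\mcV(\ann_\cob S(X\otimes_kY))$ from \cref{e:containment} cannot simply be promoted to an equality, and a priori $\supp_\cob S(X\otimes_kY)$ need not even be closed. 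As written, your chain of identifications computes $\join(\supp_\cob S X,\supp_\cob S Y)$ as $\mcV(\ann_\cob S(X\otimes_kY))$, not as $\supp_\cob S(X\otimes_kY)$. Closing this gap is exactly what the second half of the paper's proof does: take a finite generating set $G$ of $X\otimes_kY$ over $\cob S\otimes_k\cob S$, let $T$ be the $\cob S$-submodule it generates, note $\ann_\cob S T=\ann_\cob S(X\otimes_kY)$, and sandwich $\supp_\cob S(X\otimes_kY)$ between $\supp_\cob S T=\mcV(\ann_\cob S T)$ and $\mcV(\ann_\cob S(X\otimes_kY))$ to conclude the support is closed and equals $\mcV(\ann_\cob S(X\otimes_kY))$. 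Some such argument is indispensable for your approach.

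Two further remarks. Your route is genuinely different from the paper's: you work with annihilators and the closure of the image of the rational map $\delta$, while the paper works with Foxby's small support and a fiberwise computation---for $N$ finitely generated over $\cob S\otimes_k\cob S$, one has $\fp\in\ssupp_\cob S N$ if and only if $\ssupp_{\cob S\otimes_k\cob S}(N)$ meets $\delta^{-1}(\fp)\smallsetminus D$---which makes the deletion of $D$ automatic and avoids images of rational maps altogether. In your version, the obstacle you flag is real but is better handled than by the scaling/specialization sketch you give: that sketch is aimed at the wrong statement, since points of $J(U,V)\cap D$ have no image under $\delta$, so knowing they are specializations of points off $D$ does not by itself control $\overline{\delta(J(U,V)\smallsetminus D)}$. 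What is needed is the inclusion $\mcV(\Delta^{-1}\cob I)\subseteq\overline{\delta(\mcV(\cob I)\smallsetminus D)}$ for $\cob I$ the defining ideal of $J(U,V)$, and this follows from graded lying over for minimal primes along the injection $\cob S/\Delta^{-1}\cob I\hookrightarrow(\cob S\otimes_k\cob S)/\cob I$, together with the observation that a homogeneous prime $\fq\supseteq\cob I$ whose contraction lies in $\proj\cob S$ automatically avoids the locus where $\delta$ is undefined (if $\fq$ contained $\Delta(\cob S^{>0})$, its contraction would contain $\cob S^{>0}$). With these two repairs your argument would go through, but as proposed it has a genuine gap at the identification of $\supp_\cob S(X\otimes_kY)$.
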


\begin{proof}
As a matter of notation, we write  $\cob S^\env$ for $\cob S\otimes_k \cob S$ and  use $\overline{(-)}$ for closure in the Zariski topology. 
For any finite generated $\cob S^\env$-module $N$ and $\fp\in \proj \cob S $ one has
\[
N\lotimes_{\cob S} \kappa(\fp)\simeq N\lotimes_{\cob S^\env} (\cob S^\env\lotimes_{\cob S} \kappa(\fp))\,.
\]  
This leads to the following equivalences:
\begin{align*}
    \fp\in \ssupp_\cob S N&\iff \ssupp_{{\cob S^\env}}(N)\cap \ssupp_{\cob S^\env}({\cob S^\env}\lotimes_{\cob S} \kappa(\fp))\neq\emptyset\\
    &\iff \ssupp_{{\cob S^\env}}(N)\cap(\delta^{-1}(\fp)\smallsetminus D) \neq \emptyset\\
    &\iff \fp\in \delta(\ssupp_{\cob S^\env} (N)\smallsetminus D)\,.
\end{align*}
Applying this observation to $N\colonequals X\otimes_kY$ justifies the last equality below:
\begin{align*}
 \join(\supp_\cob S X, \supp_\cob S Y)&=\overline{\delta(\supp_{\cob S^\env} (X\otimes_k Y)\smallsetminus D)}\\
 &=\overline{\delta(\ssupp_{\cob S^\env} (X\otimes_k Y)\smallsetminus D)}\\
 &=\overline{\ssupp_\cob S (X\otimes_k Y)}\,.
 \end{align*}
The first equality holds as $X,Y$ are finitely generated over $\cob S$, whilst the second equality holds because $X\otimes_kY$ is finitely generated over $\cob S^\env$. Thus, for the desired statement, it suffices to verify that
 \[
 \overline{\ssupp_\cob S (X\otimes_k Y)}=\supp_\cob S (X\otimes_k Y)\,.
 \] 
To that end, given \cref{c:support}, it suffices to verify that $\supp_\cob S (X\otimes_k Y)$ is closed in $\proj\cob S$. As an $\cob S$-module $X\otimes_k Y$ need not be finitely generated, but it is finitely generated over $\cob S^\env$, and that suffices. 
 
Indeed, let $G$ be a finite generating set for $X\otimes_k Y$ over $\cob S^\env$ and $T$ the $\cob S$-submodule of $X\otimes_kY$ generated by $G$; here $\cob S$ is acts via the diagonal map \cref{polynomialdiagonal}. 
Since $T$ is finitely generated over $\cob S$, one gets the first equality below:
 \begin{align*}
\mcV(\ann_\cob S T)&=\supp_\cob S T\\
&\subseteq\supp_\cob S (X\otimes_k Y) \\
&\subseteq\mcV(\ann_\cob S (X\otimes_k Y)) \\
&= \mcV(\ann_\cob S T)\,.
 \end{align*} 
The containments are from \cref{c:support}; the last equality holds as $\ann_\cob S (X\otimes_k Y)=\ann_\cob S T$. Thus the inclusions above are equalities, as desired.
 \end{proof}

%%%%%%%%%%%%%%%%%%%%%%%%%%%%%%%%%%%%%%%
\section{Dg modules over graded algebras }
%%%%%%%%%%%%%%%%%%%%%%%%%%%%%%%%%%%%%%%
Let $A=\{A_i\}_{i\in \mathbb{Z}}$ be a strictly graded-commutative dg algebra. Its homology algebra, $\h(A)$, is thus also strictly graded-commutative. 

\begin{chunk}
A dg $A$-module $F$ is \emph{semifree} provided it admits an exhaustive filtration 
\[
0=F(-1)\subseteq F(0)\subseteq F(1)\subseteq \ldots \subseteq F
\] where each subquotient $F(i)/F(i-1)$ is a coproduct of suspensions of $A$. A \emph{semifree} resolution of a dg $A$-module $M$ is a surjective quasi-isomorphism of dg A-modules $F\xra{\simeq} M$ where $F$ is a semifree dg A-module. Such resolutions of $M$ exist and any two are unique up to homotopy equivalence; see, for example, \cite[6.6]{Felix/Halerpin/Thomas:2001}. 
\end{chunk}

\begin{chunk}Let $M$ be a dg $A$-module and fix $F\xra{\simeq} M$ a semifree resolution over $A$. By \cite[ ]{Felix/Halerpin/Thomas:2001}, the functors $F\otimes_A -$ and $\Hom_A(F,-)$ preserve (surjective) quasi-isomorphisms. Hence by replacing objects with their semifree resolutions we obtain bi-functors $-\lotimes_A-$ and $\RHom_A(-,-)$ on $\dcat{A}$; that is to say, 
\[
M\lotimes_A -\coloneq F\otimes_A - \quad \text{ and }\quad \RHom_A(M,-) \coloneq \Hom_A(F,-)\,.
\]
As usual, we set
\[
 \Tor^A_*(M,N) \coloneq \h_*(M\lotimes_AN) \quad\text{ and }\quad \Ext_A^*(M,N)\coloneq \h^*(\RHom_A(M,N))\,.
 \]
 As $A$ is graded-commutative these are graded $\h(A)$-modules. 
\end{chunk}

\begin{chunk}The derived category of dg $A$-modules is denoted $\dcat A$, and it is regarded as a triangulated category in the standard way; see, for example, \cite[Section~2]{Avramov/Buchweitz/Iyengar/Miller:2010}.  The suspension functor associates to each dg $A$-module $M$ the dg module $\shift M$ with
\[
(\shift M)_i=M_{i-1}, \quad \del^{\shift M}=-\del^M \quad \text{ and }\quad a\cdot \susp m=(-1)^{|a|}am\,
\]
where $|a|$ denotes the  degree $a$. A \emph{thick} subcategory of a triangulated category is a triangulated subcategory that is closed under retracts. 
\end{chunk}

\begin{chunk}
Let $A$ be a dg algebra over a field $k$. 
We define several thick subcategories of $\dcat{A}$ that will be of interest in what follows.

Let $\dfpcat{A}$ denote the full subcategory of $\dcat{A}$ consisting of dg $A$-modules $M$ with each $\h_i(M)$ finite dimensional and $\h_{i}(M)=0$ for all $i\ll 0;$ define $\dfmcat{A}$ analogously where the second condition is replaced with $\h_{i}(M)=0$ for all $i\gg 0$. We let $\dfbcat{A}$ denote $\dfpcat{A}\cap \dfmcat{A}.$ That is, $\dfbcat{A}$ consists exactly of those dg $A$-modules whose homology is finite dimensional over $k$. We write $\perf(A)$ for the thick subcategory of $\dcat{A}$ generated by $A$; see \cite[Theorem~4.2]{Avramov/Buchweitz/Iyengar/Miller:2010} for an alternative characterization.
\end{chunk}

%%%%%%%%%%%%%%%%%%%%%%%%%%%%%%%%%%%%%%%
\section{Exterior algebras}
%%%%%%%%%%%%%%%%%%%%%%%%%%%%%%%%%%%%%%%
In this section $V\coloneq \{V_i\}_{i\in \mathbb{Z}}$ is a finite graded $k$-space concentrated in positive odd degrees. Set $(-)^\vee\coloneq \Hom_k(-,k)$, the graded dual, and $W\coloneq  \shift^{-1}(V^{\vee})$.
Let 
\[
\Lambda\coloneq \bigwedge_k V  \quad \text{and} \quad \cob S\coloneq \Sym_k W\,;
\]
the former being the exterior algebra, over $k$, on $V$. Set $\Gamma\coloneq {\cob S}^\vee$ with the standard $\cob S$-module structure: For $\alpha\in \Gamma$ and $\chi\in \cob S$, one has
 \[
 \chi\cdot \alpha\coloneq \alpha(\chi\cdot -)\,.
 \]
 We view $\Lambda$ as a graded Hopf algebra, with coproduct $\Lambda \to \Lambda\otimes_k \Lambda$ induced by the map of $k$-spaces $v\mapsto (v,1) + (1,v)$, for $v\in V$. Hence for any left dg $\Lambda$-module the antipode defines a dg $\Lambda$-module structure on $M^\vee$. Also, for a pair of dg $\Lambda$-modules $M,N$, their tensor product $M\otimes_k N$ is regarded as a dg $\Lambda$-module through the coproduct. See, for example, \cite[Remark~5.2]{Avramov/Iyengar:2010}. We also view $\cob S$ as a graded Hopf algebra over $k$, with coproduct defined in \cref{c:join}.

\begin{notation}
Fix a basis $e_1,\dots,e_c$ for $V$, and let $\chi_1,\ldots,\chi_c$ be the dual basis for $W$; thus $\chi_i$ has lower degree $-|e_i|-1$.    These determine isomorphisms 
\[
\Lambda \cong \bigwedge\left(ke_1\oplus \ldots \oplus ke_c\right) \quad \text{ and } \quad  \cob S\cong k[\chi_1,\ldots,\chi_c] \,.
\] 
\end{notation}

\begin{chunk}
\label{c:universalresolutions}
For a dg $\Lambda$-module $M$, its \emph{universal resolution} $\univ M$ is the dg $(\Lambda\otimes_k \cob S)$-module with underlying graded $(\Lambda\otimes_k \cob S)$-module $\Lambda\otimes_k \Gamma \otimes_k M$, with $\Lambda\otimes_k \cob S$ acting by left multiplication on the two left factors, and differential 
\[ 
1\otimes 1\otimes\del^M+\sum_{i=1}^c (1\otimes\chi_i\otimes e_i-e_i\otimes\chi_i\otimes 1)\,.
\] 
The canonical projection $\univ M\to M$ is a semifree resolution of $M$ over $\Lambda$; see \cite[Proposition~2.6]{Avramov/Buchweitz:2000a} or \cite[Section~7]{Avramov/Buchweitz/Iyengar/Miller:2010}. Moreover, since $\univ M$ is a dg module over $\Lambda\otimes_k\cob S$, the graded $k$-space $\Hom_\Lambda (\univ M,-)$ retains a dg $\cob S$-module structure and so
\[
\Ext_\Lambda^*(M,-)=\h^*(\Hom_\Lambda (\univ M,-))
\] 
is a graded $\cob S$-module. 
\end{chunk}

\begin{chunk}
\label{c:bgg}
Let $M$ be dg $\Lambda$-module with $M_i$ degreewise finite dimensional over $k$ for each $i$ and $0$ for $i\ll 0$; up to quasi-isomorphism any object in $\dfpcat{\Lambda}$ has this form. There is an isomorphism of dg $\cob S$-modules
\begin{equation}
\label{e:curveddg}
\Hom_\Lambda (\univ M,k)\cong\cob S\otimes_k M^\vee 
\end{equation} 
where the right-hand term has differential $1\otimes \del^{M^\vee }+\sum_{i=1}^c \chi_i\otimes e_i;$ we denote the dg $\cob S$-module on the right by $\cob S_{M}$. From this isomorphism and \cite[Proposition 1.2.8]{Pollitz:2021}, $\Hom_\Lambda (\univ M,k)$ is a semifree dg $\cob S$-module. 

The contravariant functor $\Hom_\Lambda(\univ(-),k)$ induces the exact functor 
\[
\bgg\colon \dcat{\Lambda}^{\textsf{op}}\to \dcat{\cob S}\,.
\] 
By \cite{Avramov:2013}, this restricts to an exact equivalence 
\[
\bgg\colon \dfpcat{\Lambda}^{\textsf{op}}\xra{\equiv} \dfmcat{\cob S}\,,
\] 
that further restricts to equivalences 
\[
\dfbcat{\Lambda}^{\textsf{op}}\xra{\equiv} \perf (\cob S)\quad \text{ and }
	\quad \perf(\Lambda)^{\textsf{op}}\xra{\equiv} \dfbcat{\cob S}\,.
\]
One has also the functor $\Hom_\Lambda(\univ k,-)$ that induces an exact functor 
\[
\bg \colon \dcat{\Lambda}\to \dcat{\cob S}\,
\]  which restricts to equivalences 
\[
\dfbcat{\Lambda}\xra{\equiv} \perf (\cob S)\quad \text{ and }
	\quad \perf(\Lambda)\xra{\equiv} \dfbcat{\cob S}\,.
\]
cf.\@ \cite{Avramov/Buchweitz/Iyengar/Miller:2010}. There is the following commutative diagram
\[
 \begin{tikzcd}
 \dfpcat{\Lambda}^{\textsf{op}} \ar[d,swap, "(-)^\vee "] \ar[r,"\bgg"]& \dfmcat{\cob S}\,. \\
 \dfmcat{\Lambda} \ar[ru,swap,"\bg"]& 
 \end{tikzcd}
\]
\end{chunk}

\begin{chunk}
The functors $\bg,\bgg$ defined above determine two notions of cohomological support for dg $\Lambda$-modules. Namely, for a dg $\Lambda$-module $M$, consider subsets of $\proj \cob S$
\begin{align*}
 \V_\Lambda^\bg(M) &\colonequals \supp_\cob S \h (\bg M )=\supp_\cob S \Ext_\Lambda(k,M)\\
 \V_\Lambda^\bgg(M)&\colonequals \supp_\cob S \h (\bgg M )=\supp_\cob S \Ext_\Lambda(M,k)\,.
\end{align*}
\end{chunk}
In \cite{Carlson/Iyengar:2015}, the supports $\V_\Lambda^\bg(-)$ are used to classify the thick subcategories of $\dfbcat{\Lambda}$. 
Our focus will be on $\V_\Lambda^\bgg(-)$ but it is worth recording their relationship.

\begin{proposition}
Let $M$ be in $\dfpcat{\Lambda}$. There is an equality $\V_\Lambda^\bgg(M)=\V_\Lambda^\bg(M^\vee )$.
Moreover if $M$ is in $\dfbcat{\Lambda}$, then  $\V_\Lambda^\bgg(M)=\V_\Lambda^\bg(M)$.
\end{proposition}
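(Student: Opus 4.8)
The plan is to unwind both sides of the asserted equalities using the explicit descriptions of the functors $\bg$ and $\bgg$ recorded in \cref{c:bgg} and the definition of cohomological support in terms of $\supp_{\cob S}$ of the relevant $\Ext$-modules. The first claim, $\V_\Lambda^\bgg(M)=\V_\Lambda^\bg(M^\vee)$ for $M\in\dfpcat{\Lambda}$, amounts to showing that the graded $\cob S$-modules $\Ext_\Lambda(M,k)$ and $\Ext_\Lambda(k,M^\vee)$ have the same support in $\proj\cob S$. The cleanest route is to invoke the commutative diagram at the end of \cref{c:bgg}: it says exactly that $\bgg M\simeq\bg(M^\vee)$ in $\dcat{\cob S}$ (the diagram states $\bg\circ(-)^\vee=\bgg$ on $\dfpcat{\Lambda}^{\textsf{op}}$), and hence $\h(\bgg M)\cong\h(\bg(M^\vee))$ as graded $\cob S$-modules. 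Taking $\supp_{\cob S}$ of both sides gives the first equality with essentially no further work; I would just spell out that $M^\vee$ lies in $\dfmcat{\Lambda}$ so that $\bg(M^\vee)$ is defined, which is precisely what the left vertical arrow of the diagram records.

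For the second claim, suppose $M\in\dfbcat{\Lambda}$. Now $M^\vee$ is again in $\dfbcat{\Lambda}$ (finite total dimension is self-dual), so the first part already gives $\V_\Lambda^\bgg(M)=\V_\Lambda^\bg(M^\vee)$, and it remains to identify $\V_\Lambda^\bg(M^\vee)$ with $\V_\Lambda^\bg(M)$. The natural approach is to show that duality $(-)^\vee$ does not change $\bg$-support on $\dfbcat{\Lambda}$. One way: on $\dfbcat{\Lambda}$ the functor $\bg$ is an equivalence onto $\perf(\cob S)$, and for a perfect $\cob S$-complex $P$ the support $\supp_{\cob S}\h(P)$ equals the support of its $\cob S$-linear dual $\RHom_{\cob S}(P,\cob S)$, since $P_{\fp}=0$ if and only if $\RHom_{\cob S}(P,\cob S)_{\fp}=0$ for a perfect complex. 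So it suffices to check that $\bg(M^\vee)$ and the $\cob S$-dual of $\bg(M)$ have the same support; both are computed from the Koszul-type dg $\cob S$-module $\cob S\otimes_k M^\vee$ (respectively $\cob S\otimes_k M$) with differential twisted by $\sum_i\chi_i\otimes e_i$, as in \cref{e:curveddg}, and passing to $\cob S$-linear duals interchanges these two descriptions up to a shift. Alternatively, and perhaps more transparently, one can argue that $\Ext_\Lambda(k,M)$ and $\Ext_\Lambda(M,k)$ have the same annihilator in $\cob S$ when $M$ has finite length homology — a standard fact since $k$ and $M$ generate the same thick subcategory of $\dfbcat{\Lambda}$, so each is built from the other and the action of $\cob S$ factors through a common quotient — whence the two $\mcV(\ann_{\cob S}(-))$ agree; since both $\Ext$ modules are finitely generated over $\cob S$ (as $M\in\dfbcat{\Lambda}$ forces $\bg M,\bgg M\in\perf(\cob S)$), their supports coincide with these closed sets by \cref{c:support}.

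I would organize the write-up as: (i) recall from the diagram in \cref{c:bgg} that $\bgg M\simeq\bg(M^\vee)$ for $M\in\dfpcat{\Lambda}$, deduce the first equality by applying $\supp_{\cob S}\h(-)$; (ii) note $M\in\dfbcat{\Lambda}\implies M^\vee\in\dfbcat{\Lambda}$; (iii) prove $\V_\Lambda^\bg(M^\vee)=\V_\Lambda^\bg(M)$ via the thick-subcategory / common-annihilator argument together with finite generation of the relevant $\Ext$ modules over $\cob S$. The main obstacle is step (iii): one needs to be careful that $\supp_{\cob S}$ is insensitive to $\Lambda$-linear duality, and the slickest justification really does seem to go through the fact that on $\dfbcat{\Lambda}$ everything in sight is finitely generated over $\cob S$, so that supports, small supports, and zero sets of annihilators all coincide (\cref{c:support}); then the equality of annihilators follows because $M$ and $M^\vee$ — equivalently $\bg M$ and $\bg(M^\vee)$ — lie in the thick subcategory generated by one another, which forces $\ann_{\cob S}\Ext_\Lambda(k,M)$ and $\ann_{\cob S}\Ext_\Lambda(k,M^\vee)$ to have the same radical. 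Everything else is formal manipulation with the explicit models already set up in \cref{c:universalresolutions,c:bgg}.
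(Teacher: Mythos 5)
Your first equality is exactly the paper's argument: it is immediate from the identity $\bg((-)^\vee)=\bgg$ recorded in \cref{c:bgg}, applied to $\supp_{\cob S}\h(-)$. The problem is with your justification of the second equality. The claim in your middle paragraph that ``$k$ and $M$ generate the same thick subcategory of $\dfbcat{\Lambda}$'' whenever $M$ has finite-dimensional homology is false: take $M=\Lambda$ itself, which lies in $\dfbcat{\Lambda}$, yet $\thick(\Lambda)=\perf(\Lambda)$ does not contain $k$ when $c\geq 1$ (if it did, $\Ext_\Lambda(k,k)$ would be finite dimensional, whereas it is the polynomial algebra $\cob S$). Moreover, even granted a common thick subcategory, ``the action of $\cob S$ factors through a common quotient'' is not the right mechanism; the correct (and easy) statement is that $N\in\thick(N')$ forces $\V_\Lambda^{\bg}(N)\subseteq\V_\Lambda^{\bg}(N')$ and likewise for $\bgg$. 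The input you actually need, and which you assert without justification only in your closing paragraph, is that $M$ and $M^\vee$ generate the same thick subcategory when $M\in\dfbcat{\Lambda}$. That is a genuine, nontrivial ingredient: the paper does not reprove it but cites \cite[Section~4]{Liu/Pollitz:2021}. With that fact in hand, your argument collapses to the paper's proof (two-way support containment plus the first equality), so as written the second half has a real gap rather than a different proof.

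Your alternative route (a) is worth noting: since $\bg$ restricts to an equivalence $\dfbcat{\Lambda}\xra{\equiv}\perf(\cob S)$, and a perfect $\cob S$-complex has the same support as its $\cob S$-linear dual, it would suffice to identify $\RHom_{\cob S}(\bg M,\cob S)$ with $\bg(M^\vee)$ up to shift using the explicit twisted models $\cob S\otimes_k M$ and $\cob S\otimes_k M^\vee$ from \cref{c:bgg} and \cref{e:curveddg}. This would give a self-contained proof of the second equality, genuinely different from the paper's citation-based one, but you have only sketched it: the verification that $\cob S$-duality of the twisted complex reproduces the antipode-twisted $\Lambda$-action on $M^\vee$ (signs and the shift) is precisely the content that must be written out. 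Either carry out that computation or invoke the thick-subcategory result for $M$ and $M^\vee$; the argument in your middle paragraph does not establish it.
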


\begin{proof}
The first equality is immediate from $\bg((-)^\vee )=\bgg$; see \cref{c:bgg}. The second equality follows from the first. Indeed, it is easy to check that if $N$ is in the thick subcategory generated by $N'$ then
\[
\V_\Lambda^\bg(N)\subseteq \V_\Lambda^\bg(N')\quad \text{ and }\quad \V_\Lambda^\bgg(N)\subseteq \V_\Lambda^\bgg(N')\,.
\]
When $M$ is in $\dfbcat{\Lambda}$, the dg $\Lambda$-modules $M$ and $M^\vee $ generate the same thick subcategory---see \cite[Section~4]{Liu/Pollitz:2021}---so the second equality follows from the first. 
\end{proof}

%%%%%%%%%%%%%%%%%%%%%%%%%%%%%%%%%%%%%%%
\section{Support for tensor products, I}
%%%%%%%%%%%%%%%%%%%%%%%%%%%%%%%%%%%%%%%
As in the previous section, $V\colonequals \{V_i\}_{i>0}$ is a finite graded $k$-space concentrated in positive even degrees, and 
\[
\Lambda\coloneq \bigwedge V \quad \text{and} \quad  \cob S \coloneq  \Sym_k W\,,
\]
where $W\colonequals \shift^{-1}(V^{\vee})$. In this section we analyze the interaction between the functor $\bgg\colon \dfpcat{\Lambda}^{\text{op}}\to \dfmcat{\cob S} $, from \cref{c:bgg}, and the tensor products $\otimes_k$ and $\lotimes_\Lambda$. The main results, \cref{p:hopftensor} and \cref{p:derivedtensor}, are folklore but we could not find adequate references, so we give complete proofs; see also \cref{r:buchweitz}.

\begin{lemma}
\label{l:NAK}
For ${\cob S}$-modules $X,Y$ with finitely generated homology, 
\[
\supp_{\cob S}\h(X\lotimes_S Y)=\supp_{\cob S} \h(X)\cap \supp_{\cob S}\h(Y)\,.
\]
\end{lemma}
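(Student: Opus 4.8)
The plan is to verify the asserted equality after homogeneous localization at each prime $\fp\in\proj\cob S$, thereby reducing it to a Nakayama-type statement over the graded-local ring $(\cob S_\fp,\fp\cob S_\fp,\kappa(\fp))$. Since homogeneous localization is exact it commutes with $\lotimes$ and with homology, so $\h(X\lotimes_{\cob S}Y)_\fp\cong\h(X_\fp\lotimes_{\cob S_\fp}Y_\fp)$, $\h(X)_\fp\cong\h(X_\fp)$ and $\h(Y)_\fp\cong\h(Y_\fp)$; moreover $\h(X_\fp)$ and $\h(Y_\fp)$ remain finitely generated over $\cob S_\fp$, and, since $\cob S$ is noetherian and concentrated in non-negative cohomological degrees, they vanish in cohomological degrees $i\ll0$. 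In view of the description of $\supp_{\cob S}$ recalled in \cref{c:support}, the lemma is thus equivalent to the claim that for every such $\fp$
\[
X_\fp\lotimes_{\cob S_\fp}Y_\fp\not\simeq0\qquad\Longleftrightarrow\qquad X_\fp\not\simeq0\ \text{ and }\ Y_\fp\not\simeq0\,.
\]

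The forward implication is immediate: the derived tensor product of an acyclic complex with anything is acyclic. The converse rests on the following fact over $(\cob S_\fp,\fp\cob S_\fp,\kappa(\fp))$: if $C$ is a complex of graded $\cob S_\fp$-modules with finitely generated homology vanishing in cohomological degrees $i\ll0$, then $C\not\simeq0$ implies $C\lotimes_{\cob S_\fp}\kappa(\fp)\not\simeq0$. Granting this, suppose $X_\fp\not\simeq0$ and $Y_\fp\not\simeq0$, so that $X_\fp\lotimes_{\cob S_\fp}\kappa(\fp)\not\simeq0$ and $Y_\fp\lotimes_{\cob S_\fp}\kappa(\fp)\not\simeq0$. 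Associativity and base change for the derived tensor product give
\[
(X_\fp\lotimes_{\cob S_\fp}Y_\fp)\lotimes_{\cob S_\fp}\kappa(\fp)\;\simeq\;\bigl(X_\fp\lotimes_{\cob S_\fp}\kappa(\fp)\bigr)\lotimes_{\kappa(\fp)}\bigl(Y_\fp\lotimes_{\cob S_\fp}\kappa(\fp)\bigr)\,,
\]
and since $\kappa(\fp)$ is a graded field the Künneth isomorphism identifies the homology of the right-hand side with the $\kappa(\fp)$-tensor product of the two nonzero homologies, which is again nonzero. Hence $X_\fp\lotimes_{\cob S_\fp}Y_\fp$ is not acyclic, which finishes the reduction.

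To prove the quoted fact, choose a minimal semifree resolution $F\xra{\simeq}C$ over $\cob S_\fp$: because $\h(C)$ is finitely generated over the noetherian graded-local ring $\cob S_\fp$ and bounded below cohomologically, such an $F$ exists with each $F^i$ of finite rank, with $F^i=0$ for $i\ll0$, and with $\del(F)\subseteq\fp\cob S_\fp\cdot F$. Minimality forces the differential of $F\otimes_{\cob S_\fp}\kappa(\fp)$ to vanish, whence $\h^i(C\lotimes_{\cob S_\fp}\kappa(\fp))\cong F^i\otimes_{\cob S_\fp}\kappa(\fp)$ for all $i$. If $C\not\simeq0$ then $F\neq0$, so $F^i\neq0$---and therefore $F^i\otimes_{\cob S_\fp}\kappa(\fp)\neq0$---for some $i$, giving the fact.

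The only nonformal ingredient, and the step I expect to be the crux, is the existence of these minimal semifree resolutions over $\cob S_\fp$ together with the fact that minimality kills the differential modulo $\fp\cob S_\fp$; this is precisely where the hypothesis of finitely generated homology is used (an unbounded module such as $\cob S_\fp$ itself would defeat Nakayama). The remaining steps---exactness of homogeneous localization, associativity and base change for $\lotimes$, and Künneth over the graded field $\kappa(\fp)$---are routine, provided the internal and cohomological gradings are kept carefully apart.
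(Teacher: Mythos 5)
Your reduction step is fine and in fact matches the paper's skeleton: the paper also identifies the supports with loci of nonvanishing derived fibers and then concludes via base change and the K\"unneth isomorphism over the graded field $\kappa(\fp)$ (citing \cite[Theorem~2.4]{Carlson/Iyengar:2015} for the identification, plus finite global dimension of $\cob S$ to know $\h(X\lotimes_{\cob S}Y)$ is finitely generated). The gap is in the step you yourself flag as the crux. In this paper the dg $\cob S$-modules in question (e.g.\ $\bgg M=\cob S\otimes_k M^\vee$) carry a \emph{single} grading: the cohomological and internal degrees coincide, so they cannot be ``kept apart.'' Now for $\fp\in\proj\cob S$ the homogeneous localization inverts some homogeneous element of positive degree (such an element exists precisely because $\fp$ does not contain the irrelevant ideal), so $\cob S_\fp$ has units of nonzero degree. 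Two consequences: (i) your claim that $\h(X_\fp)$ vanishes in cohomological degrees $i\ll 0$ is false whenever $\h(X_\fp)\neq 0$ --- multiplying a nonzero class by negative powers of that unit produces nonzero homology in arbitrarily negative degree --- so the hypotheses of your ``quoted fact'' are not satisfied by $C=X_\fp$; and (ii) the degree-by-degree construction of a minimal semifree resolution, which is what underlies ``such an $F$ exists with $F^i=0$ for $i\ll0$ and finite ranks,'' breaks down over $\cob S_\fp$ for exactly this reason (and note that a resolution minimal over $\cob S$, i.e.\ with $\del F\subseteq\cob S^{>0}F$, does not localize to a minimal one at $\fp$, since $\cob S^{>0}$ generates the unit ideal of $\cob S_\fp$).

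The Nakayama-type statement you need is true --- it is exactly the supp-equals-small-supp result for dg modules with finitely generated homology that the paper cites --- but your justification does not establish it. To repair the argument along your lines you would need an extra input, for instance that finitely generated homology forces $X$ to be perfect over $\cob S$ (this is where the finite global dimension of $\cob S$ enters), after which a finite semifree model over the graded-local ring $\cob S_\fp$ splits as a minimal summand plus a contractible one and graded Nakayama (which needs finiteness, not boundedness) applies; alternatively, simply invoke \cite[Theorem~2.4]{Carlson/Iyengar:2015} as the paper does. One genuinely nice feature of your reduction, worth keeping: by testing on localizations you only need the Nakayama step for $X_\fp$ and $Y_\fp$ separately, so you avoid having to know that $\h(X\lotimes_{\cob S}Y)$ is finitely generated, which the paper's proof does use.
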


\begin{proof}
Since $X,Y$ have finitely generated homology, there are equalities
 \begin{align*}
     \supp_{\cob S} \h(X)&=\{\fp\in \proj \cob S\mid X\lotimes_{\cob S}\kappa(\fp)\not\simeq 0\}\\
      \supp_{\cob S} \h(Y)&=\{\fp\in \proj \cob S\mid Y\lotimes_{\cob S}\kappa(\fp)\not\simeq 0\}\,.
 \end{align*} 
 See, for instance, \cite[Theorem~2.4]{Carlson/Iyengar:2015}.  Since $\cob S$ has finite global dimension, the $\cob S$-module $\h(X\lotimes_{\cob S} Y)$ is also finitely generated and so
 \[
  \supp_{\cob S} \h(X\lotimes_{\cob S} Y)=\{\fp\in \proj {\cob S}\mid X\lotimes_{\cob S} Y\lotimes_{\cob S}\kappa(\fp)\not\simeq 0\}\,.
 \] 
The desired equality follows from the ones above and the isomorphism
 \[
 X\lotimes_{\cob S} Y\lotimes_{\cob S}\kappa(\fp)\simeq (X\lotimes_{\cob S} \kappa(\fp))\otimes_{\kappa(\fp)}(Y\lotimes_{\cob S} \kappa(\fp))\,. \qedhere
 \] 
\end{proof}

The result below records the relationship between $\bgg$ and tensor products.

\begin{proposition}
\label{p:hopftensor}
For $M,N$ in $\dfpcat{\Lambda}$, there is an isomorphism of dg $\cob S$-modules 
\[
\bgg(M\otimes_k N)\simeq \bgg M\lotimes_{\cob S} \bgg N\,.
\] 
Furthermore if $M,N$ are in $\dfbcat{\Lambda}$, then 
\[
\V_\Lambda^\bgg(M\otimes_k N)=\V_\Lambda^\bgg(M)\cap \V_\Lambda^\bgg(N)\,.
\]
\end{proposition}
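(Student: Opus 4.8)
The plan is to prove the two assertions in \cref{p:hopftensor} in sequence, deriving the support equality as a consequence of the module isomorphism together with \cref{l:NAK}.

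First I would establish the isomorphism $\bgg(M\otimes_k N)\simeq \bgg M\lotimes_{\cob S}\bgg N$. The starting point is the explicit description of $\bgg$ on objects with degreewise finite homology bounded below: by \cref{e:curveddg} we have $\bgg M\cong \cob S_M=\cob S\otimes_k M^\vee$ with twisted differential $1\otimes\del^{M^\vee}+\sum_i\chi_i\otimes e_i$, and similarly for $N$. Since $M,N$ are in $\dfpcat{\Lambda}$, so is $M\otimes_k N$ (degreewise finiteness and lower-boundedness are preserved), and $\bgg$ is computed on it by the same formula after fixing a representative with $M_i,N_i$ degreewise finite dimensional. The key computation is the canonical identification of graded $\cob S$-modules
\[
\cob S\otimes_k (M\otimes_k N)^\vee \;\cong\; (\cob S\otimes_k M^\vee)\otimes_{\cob S}(\cob S\otimes_k N^\vee)\,,
\]
where the right-hand side is formed using the coproduct $\Delta\colon\cob S\to\cob S\otimes_k\cob S$ on one tensor factor; this is exactly the statement that tensoring up along $\Delta$ turns $\otimes_k$ into $\lotimes_{\cob S}$ on the "free part," and it is compatible with the $(M\otimes_kN)^\vee\cong M^\vee\otimes_k N^\vee$ identification coming from finite-dimensionality in each degree. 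What remains is to check that this identification carries the twisted differential on $\cob S_{M\otimes_kN}$ to the differential on $\cob S_M\otimes_{\cob S}\cob S_N$: the internal pieces $1\otimes\del^{(M\otimes_kN)^\vee}$ match by the Leibniz rule, and the curvature-type terms $\sum_i\chi_i\otimes e_i$ match precisely because the $e_i$ act on $M\otimes_k N$ through the coproduct $v\mapsto v\otimes 1+1\otimes v$ on $\Lambda$, which is dual to $\Delta$ on $\cob S$. Since both $\cob S_M$ and $\cob S_N$ are semifree over $\cob S$ (as recorded in \cref{c:bgg}), the right-hand side computes the derived tensor product, so the resulting isomorphism of dg $\cob S$-modules gives $\bgg(M\otimes_k N)\simeq\bgg M\lotimes_{\cob S}\bgg N$.

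For the second assertion, suppose $M,N$ are in $\dfbcat{\Lambda}$, so that $\bgg M,\bgg N\in\perf(\cob S)$ by \cref{c:bgg}; in particular their homology is finitely generated over $\cob S$. Then, by definition of $\V_\Lambda^\bgg$ and the isomorphism just proved,
\[
\V_\Lambda^\bgg(M\otimes_k N)=\supp_{\cob S}\h(\bgg M\lotimes_{\cob S}\bgg N)=\supp_{\cob S}\h(\bgg M)\cap\supp_{\cob S}\h(\bgg N)=\V_\Lambda^\bgg(M)\cap\V_\Lambda^\bgg(N)\,,
\]
where the middle equality is \cref{l:NAK} applied to $X=\bgg M$ and $Y=\bgg N$, both of which have finitely generated homology.

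The main obstacle I anticipate is entirely in the first part: getting the sign and differential bookkeeping right in the identification of $\cob S_{M\otimes_k N}$ with $\cob S_M\otimes_{\cob S}\cob S_N$, since $\Lambda$ and $\cob S$ are Hopf algebras dual to each other and the twisted differentials involve the $e_i$-action that must be traced carefully through the coproduct on $\Lambda$ and the diagonal $\Delta$ on $\cob S$. One has to be careful that the grading conventions (upper versus lower, the suspension $\shift^{-1}$ relating $V$ and $W$) are consistent so that $\chi_i\otimes e_i$ is a degree-$(-1)$ operator on both sides, and that the isomorphism $(M\otimes_k N)^\vee\cong M^\vee\otimes_k N^\vee$ is the graded dual and hence respects the differentials only up to the Koszul sign rule. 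Once this single compatibility is verified, everything else is a formal consequence of results already established in the excerpt.
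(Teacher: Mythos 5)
Your proposal is correct and follows essentially the same route as the paper: identify $\cob S_{M\otimes_k N}$ with $\cob S_M\otimes_{\cob S}\cob S_N$ via the twist/multiplication map, check that the twisted differentials agree because $e_i$ acts on $M\otimes_k N$ through the coproduct on $\Lambda$, use $(M\otimes_k N)^\vee\cong M^\vee\otimes_k N^\vee$ and semifreeness of $\cob S_M$, $\cob S_N$ to pass to $\lotimes_{\cob S}$, and then apply \cref{l:NAK} for the support statement. The only quibble is your remark that the right-hand side is ``formed using the coproduct $\Delta$'': for this proposition the $\otimes_{\cob S}$ carries the ordinary $\cob S$-module structure and $\Delta$ plays no role (it only enters in \cref{p:derivedtensor}), but this does not affect the argument.
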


\begin{proof}
Replacing $M$ and $N$ with semifree resolutions over $\Lambda$, we may assume both $M$ and $N$ are bounded below and degreewise finite dimensional over $k$, as in \cref{c:bgg}. Let $\Phi$ denote the composition of the isomorphisms of dg $\cob S$-modules
\[
 (\cob S\otimes_k M^\vee )\otimes_{\cob S} (\cob S\otimes_k N^\vee ) \longrightarrow
 (\cob S\otimes_{\cob S} \cob S)\otimes_k M^\vee \otimes_k N^\vee \longrightarrow \cob S\otimes_k M^\vee \otimes_k N^\vee 
\]
where the first one is the twist isomorphism given by
\[
(s\otimes \alpha)\otimes (s'\otimes \beta)\mapsto (s\otimes s')\otimes(\alpha\otimes\beta)
\]
and the second map is the multiplication isomorphism. It is straightforward to see 
\[
\Phi \circ \sum_{i=1}^c(\chi_i\otimes e_i)\otimes 1+1\otimes (\chi_i\otimes e_i) =\sum_{i=1}^c\chi_i\otimes (e_i \otimes 1+ 1\otimes e_i)\circ \Phi\,.
\]
As $M,N$ are degreewise finite dimensional and bounded below, there is a natural isomorphism of dg $\Lambda$-modules
\[
(M\otimes_k N)^\vee \cong M^\vee \otimes_k N^\vee\,.
\]
Hence $\Phi$ yields an isomorphism 
\[
{\cob S}_{M}\otimes_{\cob S}{\cob S}_{N}\xra{\cong} {\cob S}_{M\otimes_k N}\,.
\] 
As a consequence, \cref{e:curveddg} establishes the isomorphisms in $\dcat{\cob S}$:
\begin{equation}
\label{e:tensoriso}
\bgg(M\otimes_k N)\simeq \bgg M \otimes_{\cob S} \bgg N \simeq \bgg M\lotimes_{\cob S} \bgg N\,.
\end{equation}

As for the statement regarding supports, consider the following equalities 
\begin{align*}
\V_\Lambda^\bgg(M\otimes_k N)&=\supp_{\cob S} \h(\bgg(M\otimes_k N)) \\
&=\supp_{\cob S} \h(\bgg M\lotimes_{\cob S} \bgg N) \\
&=\supp_{\cob S} \h(\bgg M )\cap \supp_{\cob S}\h^*( \bgg N )\\
&=\V_\Lambda^\bgg(M)\cap \V_\Lambda^\bgg(N)\,.
\end{align*}
 The second equality is from \cref{e:tensoriso}, while the third is \cref{l:NAK}. 
\end{proof}

\begin{remark}
\label{r:buchweitz}
Buchweitz proved that if $M,N$ are  graded $\Lambda$-modules that are bounded below and are degreewise finite rank over $k$, then
\begin{equation}
\label{e:buchweitz}
 \bg (M\otimes_k N)\simeq \bg M\lotimes_{\cob S} \bg N\,;
\end{equation} 
see \cite[(9.4.10)]{Buchweitz:2021}.  It is easy to see that this isomorphism holds for all pairs of objects in $\dfpcat{\Lambda}$. From the equality $\bg((-)^\vee)=\bgg$ the isomorphisms in \cref{e:buchweitz} can also be deduced from (and imply) the ones in \cref{p:hopftensor}.
\end{remark}

\begin{proposition}
\label{p:derivedtensor}
For $M,N$ in $\dfpcat{\Lambda}$ there is an isomorphism of dg $\cob S$-modules
\[
\bgg(M\lotimes_\Lambda N)\simeq \bgg M\otimes_k \bgg N
\]
where the right hand side is a dg $\cob S$-module through the diagonal $\Delta\colon \cob S\to \cob S\otimes_k \cob S$ described in \cref{polynomialdiagonal}. Furthermore if $M,N$ are in $\dfbcat{\Lambda}$, then 
\[
\V_\Lambda^\bgg(M\lotimes_\Lambda N)=\join(\V_\Lambda^\bgg(M), \V_\Lambda^\bgg(N))\,.
\]
\end{proposition}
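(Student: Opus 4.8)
The plan is to establish the asserted isomorphism of dg $\cob S$-modules first, by a computation with universal resolutions in the spirit of \cref{p:hopftensor}, and then to deduce the support equality from \cref{l:support}. As in the proof of \cref{p:hopftensor}, I would begin by replacing $M$ and $N$ with semifree resolutions over $\Lambda$, so that both are bounded below and degreewise finite dimensional over $k$; then \cref{e:curveddg} applies, identifying $\bgg M$ and $\bgg N$ with $\cob S_M=\cob S\otimes_k M^\vee$ and $\cob S_N=\cob S\otimes_k N^\vee$, with their curved differentials and with $\cob S$ acting by multiplication on the symmetric-algebra factor.

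The central construction is the observation that $\univ M\otimes_\Lambda\univ N$ is a semifree resolution of $M\lotimes_\Lambda N$ over $\Lambda$: it is semifree because $\univ M$ is filtered by suspensions of $\Lambda$, whence $\univ M\otimes_\Lambda\univ N$ is filtered by suspensions of the semifree module $\univ N$; and it computes $M\lotimes_\Lambda N$ because $\univ M\xra{\simeq}M$ and $\univ N\xra{\simeq}N$ are quasi-isomorphisms of semifree dg $\Lambda$-modules. Moreover $\univ M\otimes_\Lambda\univ N$ is a dg module over $\Lambda\otimes_k\cob S\otimes_k\cob S$, one copy of $\cob S$ coming from each universal resolution, hence a dg module over $\Lambda\otimes_k\cob S$ via the diagonal $\Delta\colon\cob S\to\cob S\otimes_k\cob S$ of \cref{polynomialdiagonal}. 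I would then compute $\Hom_\Lambda(\univ M\otimes_\Lambda\univ N,k)$: as a graded $\Lambda$-module $\univ M\otimes_\Lambda\univ N$ is free on $\Gamma\otimes_k M\otimes_k\Gamma\otimes_k N$ (absorb one copy of $\Lambda$), so applying $\Hom_\Lambda(-,k)$ and passing to graded $k$-duals — using $\Gamma^\vee\cong\cob S$ and double-duality in the bounded-below, degreewise-finite range — gives $\cob S\otimes_k M^\vee\otimes_k\cob S\otimes_k N^\vee$. The summands of $\del^{\univ M}$ and $\del^{\univ N}$ in which some $e_i$ multiplies the free $\Lambda$ factor are annihilated by $\Hom_\Lambda(-,k)$, exactly as in \cref{e:curveddg}, while the summands $\chi_i\otimes e_i$ survive, one inside each copy of $\cob S$. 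A twist isomorphism reordering the tensor factors — the verification that it intertwines the curved differentials being the routine identity $\Phi\circ(\dots)=(\dots)\circ\Phi$ from the proof of \cref{p:hopftensor} — then identifies $\Hom_\Lambda(\univ M\otimes_\Lambda\univ N,k)$ with $\cob S_M\otimes_k\cob S_N=\bgg M\otimes_k\bgg N$ as dg $(\cob S\otimes_k\cob S)$-modules. Restricting scalars along $\Delta$ on both sides yields $\bgg(M\lotimes_\Lambda N)\simeq\bgg M\otimes_k\bgg N$ as dg $\cob S$-modules.

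The step I expect to be the real obstacle is this last identification: verifying that the dg $\cob S$-module $\Hom_\Lambda(\univ M\otimes_\Lambda\univ N,k)$, with $\cob S$ acting through $\Delta$, genuinely represents $\bgg(M\lotimes_\Lambda N)$ — that is, that the diagonal $\Delta$, rather than the action through a single factor, is the structure forced on us here. I would handle this by checking that $\univ M\otimes_\Lambda\univ N$, with the $\cob S$-action obtained through $\Delta$, is a $\cob S$-equivariant semifree model of $M\lotimes_\Lambda N$: one compares it, using the Hopf structure on $\cob S$ from \cref{c:join}, with the universal resolution $\univ(M\lotimes_\Lambda N)$ of a suitable dg-module representative of $M\lotimes_\Lambda N$, and deduces that $\Hom_\Lambda(-,k)$ of it computes $\bgg(M\lotimes_\Lambda N)$ with its canonical $\cob S$-structure. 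This is exactly where the coproduct on $\cob S$ enters; everything else is formal or a sign-bookkeeping variant of \cref{p:hopftensor}.

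Finally, for the support statement assume $M,N\in\dfbcat{\Lambda}$. Then $\bgg M,\bgg N\in\perf(\cob S)$ by \cref{c:bgg}, so $\h(\bgg M)$ and $\h(\bgg N)$ are finitely generated graded $\cob S$-modules. Applying $\supp_\cob S\h(-)$ to the isomorphism just established, and using the Künneth isomorphism $\h(\bgg M\otimes_k\bgg N)\cong\h(\bgg M)\otimes_k\h(\bgg N)$ of graded $(\cob S\otimes_k\cob S)$-modules (valid since $k$ is a field), one obtains
\[
\V_\Lambda^\bgg(M\lotimes_\Lambda N)=\supp_\cob S\bigl(\h(\bgg M)\otimes_k\h(\bgg N)\bigr)=\join\bigl(\V_\Lambda^\bgg(M),\V_\Lambda^\bgg(N)\bigr)\,,
\]
where the last equality is \cref{l:support}, using $\V_\Lambda^\bgg(M)=\supp_\cob S\h(\bgg M)$ and the analogous identity for $N$.
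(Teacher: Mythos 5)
Your proposal is correct and takes essentially the same route as the paper: tensor the universal resolutions, identify $\Hom_\Lambda(\univ M\otimes_\Lambda \univ N,k)$ with ${\cob S}_{M}\otimes_k{\cob S}_{N}$, use the Hopf structure (the multiplication $\mu\colon\Gamma\otimes_k\Gamma\to\Gamma$ dual to $\Delta$) to compare with $\univ(M\lotimes_\Lambda N)$, and finish with K\"unneth and \cref{l:support}. The step you defer as ``one compares it \dots\ and deduces'' is precisely where the paper writes the explicit comparison $\Phi$ with underlying map $1\otimes\mu\otimes\pi\colon\Lambda\otimes_k\Gamma^{\otimes 2}\otimes_k M\otimes_k N\to\Lambda\otimes_k\Gamma\otimes_k(M\otimes_\Lambda N)$, checks it is a $\Lambda$-linear chain map compatible with the augmentations (hence a homotopy equivalence of semifree resolutions), and observes that applying $\Hom_\Lambda(-,k)$ yields the $\cob S$-linear map $\Delta\otimes\pi^*$ for the diagonal action.
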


\begin{proof}
Replacing $M$ and $N$ with suitable resolutions we can assume $M$ and $N$ are bounded above, degreewise finite dimensional over $k$, and  semifree. As in \cref{c:universalresolutions} we consider $\Gamma={\cob S}^\vee $, regarded as a graded $\cob S$-module. Forgetting differentials one has a commutative diagram 
\[
 \begin{tikzcd}
 \univ M\otimes_\Lambda \univ N\ar[r,dotted,"\Phi"] \ar[d,swap,"\cong"] & \univ (M\otimes_\Lambda N) \\
 \Lambda\otimes_k \Gamma^{\otimes 2} \otimes_k M\otimes_k N \ar[r,"1\otimes \mu\otimes \pi"]& \Lambda\otimes_k \Gamma\otimes_k (M\otimes_\Lambda N) \ar[u,"\cong"]
 \end{tikzcd}
\]
of graded $\cob S$-modules, where the map on the bottom is defined using the  multiplication $\mu\colon \Gamma\otimes_k \Gamma \to \Gamma$ map which is dual to the diagonal $\Delta\colon \cob S\to \cob S\otimes_k \cob S$ in \cref{polynomialdiagonal}, and $\pi\colon M\otimes_k N\to M\otimes_\Lambda N$ is the canonical projection. It is straightforward to check $\Phi$ is a $\Lambda$-linear morphism of complexes that is compatible with the canonical augmentations to $M\otimes_\Lambda N.$ Thus $\Phi$ is a comparison map between semifree resolutions of $M\lotimes_\Lambda N$ over $\Lambda$, and so it is a homotopy equivalence. 

Applying $\Hom_\Lambda(-,k)$ to $\Phi$  yields the top map in the commutative diagram 
\[
 \begin{tikzcd}
 \Hom_\Lambda(\univ(M\otimes_\Lambda N),k) \ar[r,"\Phi^\vee"] \ar[d,swap,"\cong"] & 
 \Hom_\Lambda(\univ M\otimes_\Lambda \univ N,k) \ar[d,"\cong"] \\
 {\cob S}_{M\otimes_\Lambda N} \ar[r,"\Delta\otimes \pi^*"] & {\cob S}_{M}\otimes_k {\cob S}_{N} 
 \end{tikzcd}
\]
of dg $\cob S$-modules, where  ${\cob S}_{M}\otimes_k{\cob S}_{N}$ is viewed as a dg $\cob S$-module through $\Delta$.  The vertical parallel maps are isomorphisms by \cref{e:curveddg}; the one on the right also uses the standard isomorphisms
 \begin{align*}
 \Hom_\Lambda(\univ M\otimes_\Lambda \univ N,k)&\cong \Hom_\Lambda(\univ M,\Hom_\Lambda(\univ N,k))\\
 &\cong \Hom_\Lambda(\univ M,k)\otimes_k \Hom_\Lambda(\univ N,k) \,;
 \end{align*}
This is where the assumption that both $M$ and $N$ are degreewise finite rank and bounded below is needed. As $\Phi$ is a homotopy equivalence, from the commutativity of the diagram above it follows that $\Delta\otimes \pi^*$ is a homotopy equivalence of dg $\cob S$-modules justifying the first assertion; cf.\@ \cref{c:bgg}.
 
 With this in hand we have 
\[
 \h( \bgg(M\lotimes_\Lambda N))\cong \h( \bgg M \otimes_k \bgg N)\cong \h( \bgg M) \otimes_k \h( \bgg N)
 \]
 where the second map is the K\"unneth isomorphism. This gives the second of the following equalities
 \begin{align*}
 \V_\Lambda^\bgg(M\lotimes_\Lambda N)&=\supp_{\cob S} \h( \bgg(M\lotimes_\Lambda N))\\
 &=\supp_{\cob S} \left( \h( \bgg M) \otimes_k \h( \bgg N)\right) \\
 &=\join(\supp_{\cob S} \h( \bgg M), \supp_{\cob S} \h( \bgg N))\\
 &=\join(\V_\Lambda^\bgg(M),\V_\Lambda^\bgg(N))\,;
 \end{align*}
The third equality is \cref{l:support}.
 \end{proof}
 
%%%%%%%%%%%%%%%%%%%%%%%%%%%%%%%%%%%%%%%
\section{Passage to the exterior algebra}
\label{s:passage}
%%%%%%%%%%%%%%%%%%%%%%%%%%%%%%%%%%%%%%%

Throughout this section and the next $(R,\fm,k)$ is a commutative noetherian local ring. Fix a list of elements $\bmf=f_1,\ldots,f_c$ in $\fm$ and set 
\[
E\coloneq R\langle e_1,\ldots,e_c\mid\del e_i=f_i\rangle\,,
\] 
the Koszul complex on $\bmf$ over $R$, regarded as a local dg $R$-algebra in the standard way. One could take $R$ to be a local dg algebra where $\bmf$ is a list of cycles in even degrees, contained in the maximal ideal  of $R$; we stick to the situation above for ease of exposition. Two special cases are worth mention.

\begin{remark}
When $\bmf$ forms an $R$-regular sequence the augmentation $E\xra{\simeq}R/(\bmf)$ is a quasi-isomorphism of dg algebras and the map $R\to R/(\bmf)$ is complete intersection. When $\bmf$ is the zero sequence, $E$ is the exterior algebra over $R$ on $c$ generators of degree one. 
\end{remark}

Set $\Lambda \colonequals k\otimes_R E$ and $V\coloneq \Lambda_1$. We identify $e_1,\dots,e_c$ with their images in $\Lambda$; they are a basis for the $k$-space $V$. Set $W\coloneq \shift^{-1}(V^\vee)$,  and 
\[
\cob S\coloneq \Sym_k W\,.
\]  
Let $\chi_1,\dots,\chi_c$ be the basis of $W$ dual to $e_1,\dots,e_c$.

\begin{chunk}\label{c:univ2}
Let $M$ be a dg $E$-module whose homology is finitely generated over $R$. Let $F$ be a dg $E$-module that is semifree as a dg $R$-module, and $F\xra{\simeq} M$ an $E$-linear quasi-isomorphism. By \cite[Proposition~4.2.8]{Pollitz:2021}, $\RHom_E(M,k)$ can be equipped with a dg $\cob S$-module structure through the isomorphism 
\[
\RHom_E(M,k)\simeq \cob S\otimes_k \Hom_R(F,k)
\] 
where  the differential of the complex on the right is 
\[
1\otimes \del^{\Hom_E(F,k)}+\sum_{i=1}^c\chi_i\otimes \Hom(e_i,k)\,;
\] 
we let $\mathcal{C}_F$ denote this dg $\cob S$-module.  Following \cite[Definition~3.3.1]{Pollitz:2019}, the \emph{cohomological support of M over E} is 
\[
\V_E(M)=\supp_{\cob S} \Ext_E^*(M,k)=\supp_{\cob S}\h^*(\mathcal{C}_F)\,.
\]
A bridge to exterior algebras has been used effectively to acquire cohomological information on these support varieties when $R$ is regular and $\bmf$  is an $R$-regular sequence; see, for instance, \cite{Avramov/Iyengar:2010,Carlson/Iyengar:2015,Liu/Pollitz:2021}. This path is still sensible at this generality and can be used to establish results over $E$, as we do now.
\end{chunk}

Consider the functor $\mathsf{t}\colon \dcat{E}\to \dcat{\Lambda}$ given by $ k\lotimes_R -$. In the statement below, the construction of the dg $\cob S$-module ${\cob S}_{\mathsf{t} F}$ is given in \cref{c:bgg}

\begin{lemma}\label{l:passage}
Let $M$ be a dg $E$-module with finitely generated homology over $R$ and fix $F\xra{\simeq} M$ a quasi-isomorphism of dg $E$-modules where $F$ is semifree when regarded as dg $R$-module. There is the following isomorphism of dg $\cob S$-modules 
\[
\mathcal{C}_F\cong {\cob S}_{\mathsf{t} F}\,.
\]
In particular $\V_E(M)=\V_\Lambda^\bgg(\mathsf{t} M).$
\end{lemma}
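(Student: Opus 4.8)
The plan is to establish the isomorphism $\mathcal{C}_F\cong{\cob S}_{\mathsf{t}F}$ of dg $\cob S$-modules by a direct comparison of underlying modules and differentials, and then to read off the equality of supports. First a harmless reduction: since $\h(M)$ is a finitely generated graded $R$-module it is bounded, so $M$ is homologically bounded; and since $\V_E(M)$ does not depend on the chosen semifree $R$-resolution of $M$, I may assume $F$ is bounded below with each $F_i$ a finite free $R$-module. Then $\mathsf{t}F=k\lotimes_R F=k\otimes_R F$ because $F$ is K-flat over $R$, and applying $k\otimes_R(-)$ to a semifree filtration of $F$ over $R$ presents $\mathsf{t}F$ as a semifree dg $\Lambda$-module that is bounded below and degreewise finite over $k$. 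In particular $\mathsf{t}F$ has the form to which \cref{c:bgg} applies, so ${\cob S}_{\mathsf{t}F}$ is defined and \cref{e:curveddg} gives an isomorphism ${\cob S}_{\mathsf{t}F}\cong\Hom_\Lambda(\univ(\mathsf{t}F),k)=\bgg(\mathsf{t}F)$ of dg $\cob S$-modules.

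For the isomorphism $\mathcal{C}_F\cong{\cob S}_{\mathsf{t}F}$ itself: by \cref{c:univ2}, $\mathcal{C}_F$ has underlying graded $\cob S$-module $\cob S\otimes_k\Hom_R(F,k)$ with differential $1\otimes\del+\sum_{i=1}^{c}\chi_i\otimes\Hom(e_i,k)$, where $\Hom(e_i,k)$ is the $k$-linear dual of multiplication by $e_i$ on $F$; by \cref{c:bgg}, ${\cob S}_{\mathsf{t}F}=\cob S\otimes_k(\mathsf{t}F)^\vee$ with differential $1\otimes\del+\sum_{i=1}^{c}\chi_i\otimes e_i$, the operator $e_i$ acting on $(\mathsf{t}F)^\vee=(k\otimes_R F)^\vee$ through the dg $\Lambda$-module structure defined by the antipode of $\Lambda$. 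Now the adjunction $\Hom_R(F,k)\xra{\cong}\Hom_k(k\otimes_R F,k)=(\mathsf{t}F)^\vee$ is an isomorphism of dg $k$-vector spaces, and it carries $\Hom(e_i,k)$ to the $e_i$-action on $(\mathsf{t}F)^\vee$, the only discrepancy being the sign coming from the antipode $S(e_i)=-e_i$ together with the Koszul sign in the dual module. Tensoring this adjunction isomorphism with $\cob S$ over $k$, and correcting for that sign, transports the twisted differential of $\mathcal{C}_F$ onto that of ${\cob S}_{\mathsf{t}F}$; this is the desired isomorphism of dg $\cob S$-modules.

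Granting this, the last assertion follows at once: since $F\xra{\simeq}M$ is a quasi-isomorphism of dg $E$-modules and $\mathsf{t}$ is exact, $\mathsf{t}F\simeq\mathsf{t}M$ in $\dcat{\Lambda}$, so the exact functor $\bgg$ gives $\bgg(\mathsf{t}F)\simeq\bgg(\mathsf{t}M)$ in $\dcat{\cob S}$, whence
\begin{align*}
\V_E(M)&=\supp_\cob S\h^*(\mathcal{C}_F)=\supp_\cob S\h^*({\cob S}_{\mathsf{t}F})=\supp_\cob S\h(\bgg(\mathsf{t}F))\\
&=\supp_\cob S\h(\bgg(\mathsf{t}M))=\V_\Lambda^\bgg(\mathsf{t}M)\,,
\end{align*}
using in turn the definition of $\V_E(M)$ in \cref{c:univ2}, the isomorphism of the preceding paragraph, the identification from the first paragraph, the equivalence $\bgg(\mathsf{t}F)\simeq\bgg(\mathsf{t}M)$, and the definition of $\V_\Lambda^\bgg$. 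I expect the only genuinely delicate step to be the differential comparison in the second paragraph: one must unwind the constructions of $\mathcal{C}_F$ from \cite{Pollitz:2021} and of the twisted dg module ${\cob S}_{(-)}$ in \cref{c:bgg} precisely enough that the two Koszul-type differentials are carried to one another by the adjunction isomorphism, and the bookkeeping of the suspension and antipode signs has to be done carefully so that they actually match; everything else is formal.
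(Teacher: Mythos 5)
Your proof is correct and takes essentially the same route as the paper's: the whole content is that the adjunction isomorphism $\Hom_R(F,k)\xra{\cong}\Hom_k(\mathsf{t}F,k)$ is compatible with the $\Lambda$-action (equivalently, intertwines the operators $\Hom(e_i,k)$ and $e_i$), so that tensoring with $\cob S$ gives $\mathcal{C}_F\cong{\cob S}_{\mathsf{t}F}$, and the support equality then drops out of the definitions; in fact the antipode-twisted $\Lambda$-structure on $(\mathsf{t}F)^\vee$ is set up exactly so that no further sign correction is needed (and even a uniform sign on the $\chi_i$ would act trivially on $\proj\cob S$, so it could not disturb the supports). Two inessential slips: your opening reduction is unnecessary --- the comparison of differentials works for the given $F$, which is what the lemma asserts --- and $\mathsf{t}F$ is generally \emph{not} semifree over $\Lambda$ (e.g.\ $\mathsf{t}F\simeq k$ can occur), but this is never needed, since \cref{c:bgg} and \cref{e:curveddg} only require $\mathsf{t}F$ to be bounded below and degreewise finite over $k$.
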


\begin{proof}
For the isomorphism, since $\fm \Hom_R(F,k)=0$ the $E$-action on $\Hom_R(F,k)$ factors through $\Lambda$. It is immediate to check the adjunction isomorphism
\[
\alpha:\Hom_R(F,k)\xra{\cong} \Hom_k(\mathsf{t} F,k)
\] is one of $\Lambda$-modules. Therefore from the definitions of $\mathcal{C}_{F}$ and ${\cob S}_{\mathsf{t} F}$ in \cref{c:univ2} and \cref{c:bgg}, respectively, the map
\[
1\otimes \alpha: \mathcal{C}_{F}\to {\cob S}_{\mathsf{t} F}
\] 
is an isomorphism of dg $\cob S$-modules. The equality of supports  follows: 
\begin{align*}
 \V_E(M)&=\supp_{\cob S} \h(\mathcal{C}_F) \\
 &=\supp_{\cob S} \h({\cob S}_{\mathsf{t} F})\\
 &=\supp_{\cob S} \h({\cob S}_{\mathsf{t} M})\\
 &=\supp_{\cob S} \h(\bgg \mathsf{t} M)\\
 &=\V_\Lambda^\bgg(\mathsf{t}M)\,;
\end{align*} 
the second equality holds by the established isomorphism above and the others are clear from the various definitions. 
\end{proof}

\begin{remark} 
\label{c:cohsupp}Suppose $\bmf $ is an $R$-regular sequence and $M$ a finitely generated $R$-module such that $\bmf M=0$. The cohomological support of $M$ over $E$ agrees with support variety of $M$ introduced by Avramov in \cite{Avramov:1989}, and further developed in the work of Avramov and Buchweitz~\cite{Avramov/Buchweitz:2000b}.
 
 More generally, without the assumption $\bmf$ is regular, $\V_E(M)$ specializes to the support sets of Jorgensen \cite{Jorgensen:2002} and Avramov and Iyengar \cite{Avramov/Iyengar:2018}; cf.\@ \cite[Section 6.2]{Pollitz:2021}.  When $M$ has finite projective dimension over $R$, the cohomological support $\V_E(M)$ agrees with those above; hence \cref{l:passage} reveals how, in this setting, all of these supports are cohomological supports over an exterior algebra.
\end{remark}

\begin{chunk}
Let $\drel{E}{R}$ denote the full subcategory of $\dcat{E}$ consisting of objects $M$ such that $M$ is perfect when regarded as an object of $\dcat{R}$ via restriction of scalars. That is, if $\eta\colon R\to E$ is the structure map and $\eta_*\colon\dcat{E}\to \dcat{R}$ denotes the restriction of scalars functor along $\eta$, then $M$ is in $\drel{E}{R}$ if and only if $\eta_*(M)$ is isomorphic in $\dcat{R}$ to a bounded complex of finite rank free $R$-modules. In particular $M$ has bounded and finitely generated homology over $R$. When $R$ is regular $\drel{E}{R}$ is just the bounded derived category of dg $E$-modules. 
\end{chunk}

The result below is a particular case of a theorem of Gulliksen~\cite{Gulliksen/Levin:1969} and Avramov, Gashasrov, and Peeva~\cite{Avramov/Gasharov/Peeva:1997}.

\begin{proposition}
\label{c:fg}
For a dg $E$-module $M$, the following conditions are equivalent: 
\begin{enumerate}
 \item \label{c:fg1}$\Tor^R(k,M)$ is finitely generated over $k$; 
 \item \label{c:fg2}$\Ext_\Lambda(\mathsf{t} M,k)$ is finitely generated over $\cob S$;
 \item \label{c:fg3}$\Ext_\Lambda(k,\mathsf{t} M)$ is finitely generated over $\cob S$.
\end{enumerate}
Moreover, when $\h(M)$ is finite over $R$, the conditions above are equivalent to:
\begin{enumerate}\setcounter{enumi}{3}
 \item \label{c:fg4} $M$ is in $\drel{E}{R}$.
\end{enumerate}
\end{proposition}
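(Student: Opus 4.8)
The plan is to prove the equivalence of \cref{c:fg1}--\cref{c:fg3} by translating each condition through the functor $\mathsf{t}$ and the BGG equivalences, and then to handle the additional equivalence with \cref{c:fg4} separately under the finiteness hypothesis on $\h(M)$. First I would observe that since $\mathsf{t} M = k\lotimes_R M$, one has $\h_*(\mathsf{t} M)=\Tor^R_*(k,M)$ as graded $k$-spaces, so condition \cref{c:fg1} is precisely the statement that $\mathsf{t} M$ lies in $\dfbcat{\Lambda}$. (A priori $\Tor^R(k,M)$ need not be bounded, but finite-dimensionality over $k$ forces $\mathsf{t} M \in \dfbcat{\Lambda}$ once one knows the homology is also bounded below, which holds because $M$ is a dg $E$-module and one can arrange a resolution; the standard point here is that $\dfbcat{\Lambda}$ is exactly the dg $\Lambda$-modules with finite-dimensional total homology.) Next, recall from \cref{c:bgg} that $\bgg$ restricts to an equivalence $\dfbcat{\Lambda}^{\mathsf{op}}\xra{\equiv}\perf(\cob S)$ and $\perf(\Lambda)^{\mathsf{op}}\xra{\equiv}\dfbcat{\cob S}$, and similarly $\bg$ gives $\dfbcat{\Lambda}\xra{\equiv}\perf(\cob S)$ and $\perf(\Lambda)\xra{\equiv}\dfbcat{\cob S}$.

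The key observation for \cref{c:fg2} is that $\Ext_\Lambda(\mathsf{t} M,k) = \h^*(\bgg\,\mathsf{t} M)$, so this is a finitely generated $\cob S$-module if and only if $\bgg\,\mathsf{t} M$ lies in $\perf(\cob S)\cup\dfbcat{\cob S}$ — but more precisely, a dg $\cob S$-module has finitely generated homology if and only if it lies in $\dfmcat{\cob S}$ with that homology finitely generated over $\cob S$; since $\cob S$ has finite global dimension, this happens exactly when the module is in $\perf(\cob S)$. By the BGG equivalence $\bgg\colon\dfpcat{\Lambda}^{\mathsf{op}}\xra{\equiv}\dfmcat{\cob S}$, and its restriction to $\dfbcat{\Lambda}^{\mathsf{op}}\xra{\equiv}\perf(\cob S)$, we get $\bgg\,\mathsf{t} M\in\perf(\cob S)$ iff $\mathsf{t} M\in\dfbcat{\Lambda}$, i.e.\ iff \cref{c:fg1} holds. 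For \cref{c:fg3}, the same argument runs with $\bg$ in place of $\bgg$: $\Ext_\Lambda(k,\mathsf{t} M)=\h^*(\bg\,\mathsf{t} M)$ is finitely generated over $\cob S$ iff $\bg\,\mathsf{t} M\in\perf(\cob S)$ iff $\mathsf{t} M\in\dfbcat{\Lambda}$, again by the equivalences of \cref{c:bgg}. (One must first check $\mathsf{t} M$ lies in the appropriate half-bounded subcategory so the equivalences apply; this is where one uses that $M$ is a dg $E$-module over a noetherian local ring, so one can take $F\xra{\simeq}M$ semifree over $R$ with $F$ bounded below, whence $\mathsf{t} F = k\otimes_R F$ is bounded below and degreewise finite, putting $\mathsf{t} M\in\dfpcat{\Lambda}$.)

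For the last equivalence, assume $\h(M)$ is finite over $R$. I would show \cref{c:fg4} $\iff$ \cref{c:fg1} directly using the change-of-rings/Auslander–Buchsbaum circle of ideas: $M\in\drel{E}{R}$ means $\eta_*M$ is perfect as an $R$-complex, which (since $R$ is local noetherian and $\h(M)$ is finitely generated) is equivalent to $\Tor^R_i(k,M)=0$ for $i\gg 0$, i.e.\ to $\mathsf{t} M$ having bounded homology; combined with finite-dimensionality in each degree (automatic from $\h(M)$ finite over $R$), this is exactly \cref{c:fg1}. This is the classical characterization of finite projective dimension via vanishing of higher Tor against the residue field, extended to complexes — the reference to Gulliksen and to Avramov–Gasharov–Peeva supplies precisely this statement in the dg setting.

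The main obstacle I anticipate is the bookkeeping around boundedness: the conditions \cref{c:fg1}--\cref{c:fg3} as literally stated only assert finite generation (as a $k$-space or $\cob S$-module) without any a priori boundedness, so one has to argue that finite generation alone already forces membership in the relevant bounded subcategory before the BGG equivalences of \cref{c:bgg} can be invoked. The cleanest route is probably to first establish $\mathsf{t} M\in\dfpcat{\Lambda}$ unconditionally (from a bounded-below semifree $R$-resolution of $M$), so that $\bgg\,\mathsf{t} M\in\dfmcat{\cob S}$ and $\bg\,\mathsf{t} M$ lands in the appropriate subcategory; then finite generation of the homology of a complex in $\dfmcat{\cob S}$ over the finite-global-dimension ring $\cob S$ upgrades it to being perfect, closing the loop. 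Everything else is a formal consequence of the equivalences already recorded.
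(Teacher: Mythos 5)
Your attempt is a self-contained argument where the paper simply cites: the paper disposes of \cref{c:fg1}$\Leftrightarrow$\cref{c:fg2}$\Leftrightarrow$\cref{c:fg3} by invoking \cite[Theorem~4.3.2]{Pollitz:2021} (which ultimately rests on Gulliksen-type finiteness results, cf.\ \cite{Gulliksen/Levin:1969,Avramov/Gasharov/Peeva:1997}) and \cref{c:fg1}$\Leftrightarrow$\cref{c:fg4} by citing \cite[Corollary~1.3.2]{Bruns/Herzog:1998}. Your treatment of \cref{c:fg1}$\Leftrightarrow$\cref{c:fg4} is exactly that classical argument, and your route from \cref{c:fg1} to \cref{c:fg2} via the equivalence $\bgg\colon\dfpcat{\Lambda}^{\mathsf{op}}\xra{\equiv}\dfmcat{\cob S}$ together with the fact that a dg $\cob S$-module with finitely generated homology is perfect (finite global dimension) is sound, \emph{provided} $\mathsf{t}M\in\dfpcat{\Lambda}$. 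But that membership is not ``unconditional'': a semifree $R$-resolution of $M$ is bounded below and degreewise finite only when $\h(M)$ is bounded below and degreewise finite over $R$, which is not among the hypotheses of \cref{c:fg1}--\cref{c:fg3} (it appears only in the ``moreover'' clause), and in the directions \cref{c:fg2}$\Rightarrow$\cref{c:fg1} and \cref{c:fg3}$\Rightarrow$\cref{c:fg1} you cannot extract it from the hypothesis either. So as written your argument proves the equivalence only under a standing finiteness assumption on $\h(M)$, not in the stated generality.

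The more serious gap is condition \cref{c:fg3}. You assert the ``same argument runs with $\bg$ in place of $\bgg$,'' but \cref{c:bgg} records $\bg$ as an equivalence only on $\dfbcat{\Lambda}$ and on $\perf(\Lambda)$ (and, via the commutative diagram, on $\dfmcat{\Lambda}$, since $\bg\simeq\bgg\circ(-)^\vee$ there). It supplies no equivalence, full faithfulness, or detection statement for $\bg$ on $\dfpcat{\Lambda}$, which is where $\mathsf{t}M$ lives; the restricted equivalence $\dfbcat{\Lambda}\xra{\equiv}\perf(\cob S)$ lets you pass from $\bg\,\mathsf{t}M\in\perf(\cob S)$ to $\mathsf{t}M\in\dfbcat{\Lambda}$ only if you already know $\mathsf{t}M$ lies in a subcategory on which $\bg$ is known to be an equivalence onto its image --- and membership in $\dfbcat{\Lambda}$ is precisely what is being proved. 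Closing this step requires genuine additional input: for instance, an $\cob S$-linear duality $\Ext_\Lambda(k,X)\cong\Ext_\Lambda(X^\vee,k)$ for degreewise finite $X$ together with a BGG equivalence on the other half-bounded category, or the noetherian/Gulliksen arguments over the ring of cohomology operators. That missing input is exactly the content of the result the paper cites, so your proposal, while structurally reasonable and correct for \cref{c:fg1}$\Leftrightarrow$\cref{c:fg2} and \cref{c:fg1}$\Leftrightarrow$\cref{c:fg4} under the finiteness hypothesis, does not yet yield \cref{c:fg3}.
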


\begin{proof}
The equivalence of \cref{c:fg1}, \cref{c:fg2}, and \cref{c:fg3} is from a special case of \cite[Theorem~4.3.2]{Pollitz:2021}, and the fact  \cref{c:fg1} and \cref{c:fg4} are equivalent when $\h(M)$ is finite over $R$, is classical; see, for example, \cite[Corollary~1.3.2]{Bruns/Herzog:1998}.
\end{proof}

%%%%%%%%%%%%%%%%%%%%%%%%%%%%%%%%%%%%%%%
\section{Support for tensor products, II}
\label{s:tp-II}
%%%%%%%%%%%%%%%%%%%%%%%%%%%%%%%%%%%%%%%
The notation in this section is as in the previous one. The result below is the theorem announced in the introduction. 

\begin{theorem}
\label{t:mainresult}
Suppose $E$ is a Koszul complex over a local ring $(R,\fm,k)$ on a finite list of elements  in $\fm.$ For $M,N$ in $\drel{E}{R}$, 
\[
\V_E(M\lotimes_E N)=\join(\V_E(M),\V_E(N))\,.
\]
\end{theorem}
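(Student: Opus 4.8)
The plan is to reduce the statement over the Koszul complex $E$ to the corresponding statement over the exterior algebra $\Lambda = k\otimes_R E$, which is exactly \cref{p:derivedtensor}. The bridge is the functor $\mathsf{t} = k\lotimes_R-\colon \dcat{E}\to\dcat{\Lambda}$ together with \cref{l:passage}, which already gives $\V_E(L) = \V_\Lambda^\bgg(\mathsf{t} L)$ for any dg $E$-module $L$ with finitely generated homology over $R$. So the core of the proof is two things: first, checking that all modules in sight lie in the categories where the earlier results apply; and second, establishing the compatibility
\[
\mathsf{t}(M\lotimes_E N)\simeq \mathsf{t} M\lotimes_\Lambda \mathsf{t} N
\]
in $\dcat{\Lambda}$.

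First I would handle the bookkeeping. Since $M,N$ are in $\drel{E}{R}$, their restrictions to $R$ are perfect, hence $\Tor^R(k,M)$ and $\Tor^R(k,N)$ are finite dimensional over $k$ by \cref{c:fg}; equivalently $\mathsf{t} M$ and $\mathsf{t} N$ lie in $\dfbcat{\Lambda}$. I also need $M\lotimes_E N\in\drel{E}{R}$: restricting to $R$, $M\lotimes_E N$ is computed by tensoring a bounded complex of finite free $R$-modules (a resolution of $M$) with another such complex (restricting $N$), and $\drel{E}{R}$ is a thick subcategory of $\dcat E$ containing $E$, so $M\lotimes_E N$ is perfect over $R$ and has finite length homology once we also note the homology is finitely generated; thus $M\lotimes_E N$ has finitely generated homology over $R$, which is what \cref{l:passage} requires. (In fact, since both $\mathsf{t} M,\mathsf{t} N\in\dfbcat\Lambda$, the tensor $\mathsf{t} M\lotimes_\Lambda\mathsf{t} N\in\dfbcat\Lambda$ as well, confirming we are in the finite-dimensional regime of \cref{p:derivedtensor}.)

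For the compatibility isomorphism, I would argue as follows. Take $F\xra{\simeq}M$ a semifree resolution over $E$ (so in particular $F$ is semifree over $R$). Then $F\otimes_E N$ computes $M\lotimes_E N$, and $\mathsf{t}(M\lotimes_E N)\simeq k\lotimes_R(F\otimes_E N)\simeq (k\lotimes_R F)\otimes_E N\simeq \mathsf{t} F\otimes_E N$, using that $F$ is $R$-semifree. Now $\mathsf{t} F = k\otimes_R F$ is a semifree dg $E$-module only after base change — the cleaner route is to observe $\mathsf{t} F\otimes_E N\simeq \mathsf{t} F\otimes_E (E\otimes_E N)$ and push the $k\otimes_R(-)$ through: concretely $\mathsf{t} F\otimes_E N = (k\otimes_R F)\otimes_E N$ and, since $E$-module structures factor appropriately through the augmentation on the $k$-side, this is isomorphic to $(k\otimes_R F)\otimes_\Lambda(k\otimes_R N) = \mathsf{t} F\otimes_\Lambda\mathsf{t} N$, and as $\mathsf{t} F$ is semifree over $\Lambda$ this computes $\mathsf{t} M\lotimes_\Lambda\mathsf{t} N$. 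Granting this, the theorem falls out by stringing together equalities:
\[
\V_E(M\lotimes_E N) = \V_\Lambda^\bgg(\mathsf{t}(M\lotimes_E N)) = \V_\Lambda^\bgg(\mathsf{t} M\lotimes_\Lambda\mathsf{t} N) = \join(\V_\Lambda^\bgg(\mathsf{t} M),\V_\Lambda^\bgg(\mathsf{t} N)) = \join(\V_E(M),\V_E(N))\,,
\]
where the first and last equalities are \cref{l:passage}, the second is the compatibility just discussed, and the third is \cref{p:derivedtensor}.

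The main obstacle I anticipate is the compatibility isomorphism $\mathsf{t}(M\lotimes_E N)\simeq \mathsf{t} M\lotimes_\Lambda\mathsf{t} N$ — not that it is deep, but that it requires care about which resolutions are being used and the order in which the functors $k\lotimes_R-$ and $-\lotimes_E-$ are applied, together with the observation that $\mathsf{t} F$, while not $\Lambda$-semifree on the nose in an obvious way, can be replaced by a $\Lambda$-semifree resolution without affecting the computation, and that the $E$-action on $k\otimes_R(-)$ genuinely factors through $\Lambda$. Everything else is assembling results already in hand: \cref{c:fg} for membership, \cref{l:passage} for translating supports, and \cref{p:derivedtensor} for the join formula over $\Lambda$.
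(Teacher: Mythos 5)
Your proposal is exactly the paper's proof of \cref{t:mainresult}: pass to the exterior algebra via $\mathsf{t}$, translate supports with \cref{l:passage}, use $\mathsf{t}(M\lotimes_E N)\simeq \mathsf{t}M\lotimes_\Lambda \mathsf{t}N$, check $\mathsf{t}M,\mathsf{t}N\in\dfbcat{\Lambda}$ via \cref{c:fg}, and apply \cref{p:derivedtensor} --- the identical chain of equalities. The one slip is in your sketch of the compatibility isomorphism: resolve \emph{both} factors by semifree dg $E$-modules $F\xra{\simeq}M$ and $G\xra{\simeq}N$, so that $F\otimes_E G$ is K-flat over $R$ and $k\otimes_R(F\otimes_E G)\cong \mathsf{t}F\otimes_\Lambda \mathsf{t}G$ with both factors $\Lambda$-semifree; as written, your identification of $k\otimes_R N$ with $\mathsf{t}N$ (and the step $k\lotimes_R(F\otimes_E N)\simeq (k\otimes_R F)\otimes_E N$) is unjustified, since $N$ itself need not be K-flat over $R$.
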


\begin{proof}
 We need only pass to the exterior algebra: 
 \begin{align*}
 \V_E(M\lotimes_E N)&=\V_\Lambda^\bgg(\mathsf{t}(M\lotimes_E N))\\
 &=\V_\Lambda^\bgg(\mathsf{t}M\lotimes_\Lambda \mathsf{t} N)\\
 &=\join(\V_\Lambda^\bgg(\mathsf{t}M), \V_\Lambda^\bgg(\mathsf{t}N))\\
 &=\join(\V_E(M),\V_E(N))\,.
 \end{align*}
 The first and fourth equalities hold by \cref{l:passage}; the second one follows from the isomorphism 
 \[
 \mathsf{t}(M\lotimes_E N)\simeq \mathsf{t}M\lotimes_\Lambda \mathsf{t}N\,.
 \]
 By \cref{c:fg}, the dg $\Lambda$-modules $\mathsf{t}M,\mathsf{t}N$ are in $\dfbcat{\Lambda}$ and so the third equality holds by \cref{p:derivedtensor}.
\end{proof}

\begin{remark}
There is an alternative proof of \cref{t:mainresult}, using the Hopf algebra structure on $\Ext_E^*(k,k)$. The key point is that for any dg $E$-modules the maps
\begin{equation}
\label{e:multmap}
\Ext_E^*(M,k)\otimes_k \Ext_E^*(N,k)\to \Ext_E^*(M\lotimes_E N,k\lotimes_E k)\to \Ext_E^*(M\lotimes_E N,k)
\end{equation} 
are $\Ext_E^*(k,k)$-linear. The second map is induced by multiplication, $k\lotimes_E k\to k.$ In \cref{e:multmap} the graded Ext-module on the left is given an $\Ext_E^*(k,k)$-module structure through the diagonal
\[
\Ext_E^*(k,k)\to \Ext_E^*(k,k)\otimes_k\Ext_E^*(k,k)\,.
\]
This is a straightforward calculation. However this approach requires a bit of background on dg algebras with divided powers and suitably adapting classical material to this more general setting; cf.\@ \cite{Gulliksen/Levin:1969}. The main point is that $\Ext_E^*(k,k)$ is generated, as a $k$-algebra, by primitives induced by derivations that respect divided powers on the minimal semifree resolution of $k$ over $E$.

One can identify $\cob S$ as a Hopf subalgebra of $\Ext_E^*(k,k)$ so  the maps in \cref{e:multmap} are also $\cob S$-linear. When $M,N$ are in $\drel{E}{R}$, the $\cob S$-modules $\Ext_E^*(M,k),\Ext_E^*(N,k)$ are finite over $\cob S$, see \cref{c:fg}, so the assertion of \cref{t:mainresult} follows directly from \cref{l:support} once noting the composition in \cref{e:multmap} is an isomorphism.
\end{remark}

Consider the equivalence
 \[
 (-)^\dagger\colon \drel{E}{R}^{\textsf{op}}\longrightarrow \drel{E}{R}
 \] 
 where $M^\dagger\coloneq\RHom_E(M,E)$ for each $M$. 
 
 \begin{lemma}\label{l:dual}
 If $M$ is in $\drel{E}{R}$, then $ \V_E(M)=\V_E(M^\dagger).$
 \end{lemma}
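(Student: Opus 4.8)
The plan is to reduce the statement to a known fact about the $\bgg$ functor by passing to the exterior algebra via \cref{l:passage}, exactly as in the proof of \cref{t:mainresult}. First I would observe that since $M$ lies in $\drel{E}{R}$, so does $M^\dagger = \RHom_E(M,E)$ (this is the content of the assertion that $(-)^\dagger$ is an equivalence on $\drel{E}{R}^{\textsf{op}}$), and that both $\mathsf{t}M$ and $\mathsf{t}M^\dagger$ land in $\dfbcat{\Lambda}$ by \cref{c:fg}. Then, by \cref{l:passage}, the claim $\V_E(M)=\V_E(M^\dagger)$ is equivalent to $\V_\Lambda^\bgg(\mathsf{t}M)=\V_\Lambda^\bgg(\mathsf{t}M^\dagger)$.

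The key step is to identify $\mathsf{t}(M^\dagger)$ in $\dcat{\Lambda}$. I expect that applying $\mathsf{t}=k\lotimes_R-$ to $\RHom_E(M,E)$ yields, up to a shift, the $\Lambda$-linear dual $(\mathsf{t}M)^\vee$ — more precisely, $\mathsf{t}(\RHom_E(M,E)) \simeq \shift^{-c}\RHom_\Lambda(\mathsf{t}M,\Lambda) \simeq \shift^{?}(\mathsf{t}M)^\vee$, using that $\Lambda$ is a (graded) Frobenius algebra so that $\RHom_\Lambda(-,\Lambda)$ and $(-)^\vee=\Hom_k(-,k)$ agree up to a degree shift. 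Once this identification is in hand, the Proposition in \cref{c:bgg} (the one following the definition of $\V_\Lambda^\bg,\V_\Lambda^\bgg$) gives, for $\mathsf{t}M$ in $\dfbcat{\Lambda}$, the equalities $\V_\Lambda^\bgg((\mathsf{t}M)^\vee)=\V_\Lambda^\bg(\mathsf{t}M)=\V_\Lambda^\bgg(\mathsf{t}M)$; shifts do not affect supports, so this closes the argument.

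The main obstacle is the compatibility step: proving cleanly that $\mathsf{t}$ carries the $E$-linear dual $\RHom_E(-,E)$ to the $\Lambda$-linear (equivalently $k$-linear) dual up to shift. This requires choosing a resolution $F \xra{\simeq} M$ that is semifree over $R$, identifying $\RHom_E(M,E) \simeq \Hom_E(F,E)$, and then checking that $k\lotimes_R \Hom_E(F,E) \simeq \Hom_\Lambda(\mathsf{t}F,\Lambda)$ — this is where one uses that $E$ is free of finite rank over $R$ and that $\Lambda = k\otimes_R E$ is Frobenius over $k$. One should be slightly careful about which degree shift appears and about the fact that $\mathsf{t}F$ need not be semifree over $\Lambda$, but since both sides only enter through their images in $\dcat{\Lambda}$ and then through $\V_\Lambda^\bgg$, it is enough to have an isomorphism in the derived category. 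Alternatively, one could bypass the explicit dual and argue directly that $M$ and $M^\dagger$ generate the same thick subcategory of $\drel{E}{R}$ — cf.\ \cite[Section~4]{Liu/Pollitz:2021} — which forces the supports to coincide once one knows $\V_E(-)$ is constant on thick subcategories up to the expected inclusions in both directions.
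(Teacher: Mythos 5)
Your argument is correct, and its first half coincides with the paper's: both proofs hinge on the base-change isomorphism $\mathsf{t}(M^\dagger)=k\lotimes_R\RHom_E(M,E)\simeq \RHom_\Lambda(\mathsf{t}M,\Lambda)$, valid because $M$ is perfect over $R$, together with \cref{l:passage} and the fact (\cref{c:fg}) that $\mathsf{t}M$ lies in $\dfbcat{\Lambda}$. Where you diverge is the last step. The paper finishes by citing \cite[Theorem~4.1]{Liu/Pollitz:2021}: $\mathsf{t}M$ and $\RHom_\Lambda(\mathsf{t}M,\Lambda)$ generate the same thick subcategory of $\dcat{\Lambda}$, and $\V_\Lambda^\bgg$ only grows along thick-subcategory generation, so the two supports agree. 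You instead exploit that $\Lambda$ is graded Frobenius, so $\Lambda\simeq\shift^{c}\Lambda^\vee$ and hence $\RHom_\Lambda(\mathsf{t}M,\Lambda)\simeq\shift^{-c}(\mathsf{t}M)^\vee$ by adjunction; then the unnumbered Proposition following the definition of $\V_\Lambda^\bg$ and $\V_\Lambda^\bgg$ gives $\V_\Lambda^\bgg((\mathsf{t}M)^\vee)=\V_\Lambda^\bg((\mathsf{t}M)^{\vee\vee})=\V_\Lambda^\bg(\mathsf{t}M)=\V_\Lambda^\bgg(\mathsf{t}M)$, using double duality for objects of $\dfbcat{\Lambda}$ and the fact that suspensions (and the harmless antipode/Nakayama twist on the dual, which is twisting by an automorphism) do not change supports. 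This is a genuinely workable alternative ending, and it has the mild advantage of making the Frobenius self-duality of $\Lambda$ explicit; note, however, that it is not logically independent of the paper's route, since the ``moreover'' part of that Proposition is itself proved by citing \cite[Section~4]{Liu/Pollitz:2021} for $M$ and $M^\vee$ generating the same thick subcategory. Your suggested fallback --- arguing directly that $\mathsf{t}M$ and its $\Lambda$-dual generate the same thick subcategory --- is precisely the paper's proof. The one step you rightly flag as needing care, the identification $k\lotimes_R\Hom_E(F,E)\cong\Hom_\Lambda(\mathsf{t}F,\Lambda)$ for $F$ a semifree resolution that is perfect over $R$, is exactly the step the paper also takes (and states without further detail), so no gap remains beyond what the paper itself assumes.
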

 
 \begin{proof}
 As $M$ is perfect over $R$ there is an isomorphism of dg $\Lambda$-modules
 \[
 \mathsf{t} (M^\dagger)\simeq \RHom_\Lambda(\mathsf{t} M,\Lambda)\,.
 \] By \cite[Theorem~4.1]{Liu/Pollitz:2021}, $\mathsf{t}M$ and $\RHom_\Lambda(\mathsf{t} M,\Lambda) $ generate the same thick subcategory in $\dcat{\Lambda}.$ Thus the second equality below holds:
 \[
 \V_E(M)=\V_\Lambda^\bgg(\mathsf{t} M)=\V_\Lambda^\bgg(\RHom_\Lambda(\mathsf{t} M,\Lambda))=\V_\Lambda^\bgg(\mathsf{t}(M^\dagger))=\V_E(M^\dagger)\,.\qedhere
 \]
 \end{proof}
 
 \begin{corollary}
 \label{cor:rhom}
 If $R$ is Gorenstein and $\RHom_E(M,N)$ belongs to $\drel{E}{R}$, then 
 \[
 \V_E(\RHom_E(M,N))=\join(\V_E(M),\V_E(N)).
 \]
  \end{corollary}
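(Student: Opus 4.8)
The plan is to reduce the statement about $\RHom_E(M,N)$ to the already-established \cref{t:mainresult} for tensor products, using the duality functor $(-)^\dagger$ and the Gorenstein hypothesis. The key observation is that over a Gorenstein local ring, the Koszul complex $E$ is a dg algebra with good duality properties, and there should be an isomorphism in $\dcat{E}$ of the form
\[
\RHom_E(M,N)\simeq M^\dagger\lotimes_E N
\]
whenever $M$ is perfect over $R$ (so that $M^\dagger=\RHom_E(M,E)$ behaves well and the natural evaluation map $M^\dagger\lotimes_E N\to \RHom_E(M,N)$ is an isomorphism). This is a standard consequence of $M$ being a perfect $R$-complex: one reduces to the case $M=E$, where both sides are $N$, and concludes since both sides are exact functors that preserve triangles and coproducts in the relevant variable.

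Granting that isomorphism, the computation is immediate. First I would record that $\RHom_E(M,N)\in\drel{E}{R}$ forces both $M$ and $N$ to lie in $\drel{E}{R}$ as well (this is where one uses that $R$ is Gorenstein, hence $E^\dagger\simeq\shift^{-c}E$, so the functors $(-)^\dagger$ and $\RHom_E(-,E)$ preserve $\drel{E}{R}$, and a retract/thick-subcategory argument recovers $M,N$ from $\RHom_E(M,N)$). Then:
\[
\V_E(\RHom_E(M,N))=\V_E(M^\dagger\lotimes_E N)=\join(\V_E(M^\dagger),\V_E(N))=\join(\V_E(M),\V_E(N))\,,
\]
where the middle equality is \cref{t:mainresult} applied to $M^\dagger$ and $N$ (both in $\drel{E}{R}$), and the last equality is \cref{l:dual}.

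The main obstacle I expect is justifying the evaluation isomorphism $M^\dagger\lotimes_E N\xrightarrow{\ \simeq\ }\RHom_E(M,N)$ at the level of generality needed — namely for $M\in\drel{E}{R}$, i.e. $M$ perfect over $R$ but not necessarily over $E$. The clean argument is: the natural transformation $\RHom_E(M,E)\lotimes_E N\to\RHom_E(M,N)$ is an isomorphism whenever $M$ is in $\perf(E)$, and more generally whenever $M$ is "reflexive with perfect dual," which holds here because $M$ is perfect over $R$ and $E$ is proper over $R$ (finitely built from $R$). One also needs that $M^\dagger$ again lies in $\drel{E}{R}$, which follows since $\eta_*(M^\dagger)=\RHom_R(\eta_*M, \eta_*E)$ up to a shift is perfect over $R$ when $\eta_*M$ is and $R$ is Gorenstein. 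Once these two facts are in hand — neither requires new ideas beyond what is cited in the excerpt (\cite{Liu/Pollitz:2021}, \cite{Pollitz:2021}) — the corollary drops out of the three displayed equalities above.
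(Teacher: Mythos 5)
Your reduction to \cref{t:mainresult} via the duality $(-)^\dagger$ is the right general strategy, but the key step you lean on is not justified and, as stated, is false. You claim a natural isomorphism $M^\dagger\lotimes_E N\simeq \RHom_E(M,N)$ ``whenever $M$ is perfect over $R$,'' proved by reducing to $M=E$. That reduction only works when $M$ is finitely built from $E$ \emph{as a dg $E$-module}, i.e.\ $M\in\perf(E)$; membership in $\drel{E}{R}$ is a much weaker condition and does not permit it. Concretely, take $R=k$ a field (so $\drel{E}{R}=\dfbcat{\Lambda}$ with $E=\Lambda$ an exterior algebra) and $M=N=k$. Then $M$ is perfect over $R$, but $M^\dagger\simeq\shift^{a}k$ since $\Lambda$ is self-injective, so $M^\dagger\lotimes_\Lambda N$ has homology $\Tor^\Lambda(k,k)$, concentrated in non-negative homological degrees, whereas $\RHom_\Lambda(k,k)$ has homology $\Ext_\Lambda(k,k)\cong\cob S$, concentrated in non-positive ones; the evaluation map is not a quasi-isomorphism. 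Your fallback phrase ``reflexive with perfect dual'' does not rescue this: if $M^\dagger\in\perf(E)$ and $M$ is reflexive then $M\in\perf(E)$ anyway, which is exactly the hypothesis you cannot assume. It is conceivable that the evaluation map becomes an isomorphism under the corollary's actual hypothesis that $\RHom_E(M,N)\in\drel{E}{R}$, but that would require a genuinely new argument (of Tate-cohomology-vanishing type), and your proof never invokes that hypothesis in the main computation --- a red flag, since the paper explicitly records as open whether the corollary holds without it.

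The paper sidesteps this obstruction by dualizing on the other side: it uses the isomorphism $\RHom_E(M,N)^\dagger\simeq M\lotimes_E N^\dagger$, which follows from Gorenstein duality over $R$ (via $X^\dagger\simeq\shift^{c}\RHom_R(X,R)$ and adjunction) and needs only that $\h(M),\h(N)$ are finitely generated over $R$ --- no perfection over $E$ is required. Then $\V_E(\RHom_E(M,N))=\V_E(\RHom_E(M,N)^\dagger)$ by \cref{l:dual}, which is precisely where the hypothesis $\RHom_E(M,N)\in\drel{E}{R}$ enters, and the rest is \cref{t:mainresult} applied to $M$ and $N^\dagger$ together with \cref{l:dual} for $N$. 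Separately, your claim that $\RHom_E(M,N)\in\drel{E}{R}$ forces $M,N\in\drel{E}{R}$ is neither needed (those hypotheses are standing assumptions carried over from \cref{t:mainresult}) nor true in general (e.g.\ $N=0$, or any pair whose Ext happens to be small); no thick-subcategory argument recovers $M$ and $N$ from $\RHom_E(M,N)$.
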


 \begin{proof}
As $R$ is Gorenstein and the $R$-modules $\h(M),\h(N)$ are finitely generated,  there is an isomorphism
\[
\RHom_E(M,N)^\dagger\simeq M\lotimes_E N^\dagger\,.
\] 
Thus the second equality below holds
\begin{align*}
\V_E(\RHom_E(M,N))&=\V_E(\RHom_E(M,N)^\dagger) \\
&=\V_E( M\lotimes_E N^\dagger)\\
&=\join(\V_E(M),\V_E(N^\dagger))\\
&=\join(\V_E(M),\V_E(N))\,;
\end{align*}
the first and fourth equalities are by \cref{l:dual} and the third  is by \cref{t:mainresult}.
 \end{proof}
 
 \begin{remark}
 In light of \cref{t:mainresult}, it would be interesting to determine whether \cref{cor:rhom} holds without the assumption that $\RHom_E(M,N)$ is in $\drel{E}{R}$.
 \end{remark}
 
The result below relates the cohomological support of $M\lotimes_EN$ to those of its homology modules.  Specializing to the case $R$ is regular and $\bmf$ is an $R$-regular sequence, yields a positive answer to \cite[Question 2]{Dao/Sanders:2017}. The  containment in the statement of the theorem can be strict; see \cite[Example 5.3]{Dao/Sanders:2017}. 

\begin{theorem}
\label{t:second}
Let $M,N$ be in $\drel{E}{R}$ and suppose the $\cob S$-modules $\Ext_E^*(M,k)$ and $\Ext_E^*(N,k)$ are generated in cohomological degrees at most $s$ and $t$, respectively. There is a containment of closed subsets
	\[
	\V_E(M\lotimes_E N)\subseteq \bigcup_{i\leq s+t}\V_E(\Tor_i^E(M,N))\,.
	\]
\end{theorem}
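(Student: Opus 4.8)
The plan is to bound $\V_E(M\lotimes_E N)$ above by $\V_E\bigl(\tau_{\le s+t}(M\lotimes_E N)\bigr)$, where $\tau_{\le n}$ denotes homological truncation in $\dcat E$, and then to conclude by dévissage along the homology filtration of $\tau_{\le s+t}(M\lotimes_E N)$: its nonzero subquotients are the shifts $\susp^i\Tor_i^E(M,N)$ with $i\le s+t$ (only finitely many, since $\h(M)$ and $\h(N)$ are bounded, hence $\Tor^E_*(M,N)$ is bounded below), while $\V_E(-)$ is subadditive on exact triangles and invariant under $\susp$.

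First I would record that the canonical map $M\lotimes_E N\to\tau_{\le s+t}(M\lotimes_E N)$ induces a $\cob S$-linear isomorphism
\[
\Ext_E^m\bigl(\tau_{\le s+t}(M\lotimes_E N),k\bigr)\ \xrightarrow{\ \cong\ }\ \Ext_E^m(M\lotimes_E N,k)\qquad (m\le s+t).
\]
This follows from the truncation triangle $\tau_{\ge s+t+1}(M\lotimes_E N)\to M\lotimes_E N\to\tau_{\le s+t}(M\lotimes_E N)\to$ and the long exact sequence of $\Ext_E^*(-,k)$, once one knows that $\Ext_E^m\bigl(\tau_{\ge s+t+1}(M\lotimes_E N),k\bigr)=0$ for $m\le s+t$; this in turn holds because $\tau_{\ge s+t+1}(M\lotimes_E N)$ is represented by a dg $E$-module concentrated in degrees $\ge s+t+1$, hence admits a semifree resolution $G$ concentrated in degrees $\ge s+t+1$, and then $\RHom_E\bigl(\tau_{\ge s+t+1}(M\lotimes_E N),k\bigr)=\Hom_E(G,k)$ is concentrated in cohomological degrees $\ge s+t+1$.

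The main point is then that $\supp_\cob S\Ext_E^*(M\lotimes_E N,k)$ is detected in degrees $\le s+t$. By \cref{l:passage}, \cref{p:derivedtensor} (applied to $\mathsf t M,\mathsf t N\in\dfbcat\Lambda$, together with $\mathsf t(M\lotimes_E N)\simeq\mathsf tM\lotimes_\Lambda\mathsf tN$), and the Künneth isomorphism, there is an isomorphism of graded $\cob S$-modules
\[
\Ext_E^*(M\lotimes_E N,k)\ \cong\ \Ext_E^*(M,k)\otimes_k\Ext_E^*(N,k),
\]
the right-hand side being a $\cob S$-module via the diagonal $\Delta$ of \cref{polynomialdiagonal} (equivalently, the composite in \cref{e:multmap} is an isomorphism). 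Write $P\coloneq\Ext_E^*(M,k)$ and $Q\coloneq\Ext_E^*(N,k)$; these are finitely generated over $\cob S$ by \cref{c:fg}. By hypothesis $P=\cob S\cdot P^{\le s}$ and $Q=\cob S\cdot Q^{\le t}$, so the finite-dimensional graded subspace $P^{\le s}\otimes_k Q^{\le t}$, which lies in degrees $\le s+t$, generates $P\otimes_k Q$ as a module over $\cob S\otimes_k\cob S$. Arguing exactly as in the final paragraph of the proof of \cref{l:support}, the $\cob S$-submodule $T\subseteq P\otimes_k Q$ generated by $P^{\le s}\otimes_k Q^{\le t}$ satisfies $\ann_\cob S T=\ann_\cob S(P\otimes_k Q)$, whence $\supp_\cob S T=\supp_\cob S(P\otimes_k Q)=\V_E(M\lotimes_E N)$.

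To finish, the finitely many generators of $T$ lie in degrees $\le s+t$, so under the displayed isomorphism they lie in the image of $\Ext_E^*\bigl(\tau_{\le s+t}(M\lotimes_E N),k\bigr)\to\Ext_E^*(M\lotimes_E N,k)$; hence $T$ is contained in this image, which is a $\cob S$-module quotient of $\Ext_E^*\bigl(\tau_{\le s+t}(M\lotimes_E N),k\bigr)$. Therefore
\[
\V_E(M\lotimes_E N)=\supp_\cob S T\ \subseteq\ \supp_\cob S\Ext_E^*\bigl(\tau_{\le s+t}(M\lotimes_E N),k\bigr)=\V_E\bigl(\tau_{\le s+t}(M\lotimes_E N)\bigr),
\]
and the dévissage described at the outset yields the asserted containment. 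I expect the main obstacle to be the reduction in the third paragraph: $\Ext_E^*(M\lotimes_E N,k)\cong P\otimes_k Q$ is in general not finitely generated over $\cob S$, so one cannot simply localize the degreewise isomorphism of the second paragraph; the hypotheses on $s$ and $t$ are needed precisely to see that this module is finitely generated over $\cob S\otimes_k\cob S$ with generators in degrees $\le s+t$, after which the finite-generation argument from \cref{l:support} applies. A secondary point to attend to is the boundedness below of $\Tor^E_*(M,N)$, which makes the dévissage a finite process.
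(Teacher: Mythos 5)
Your proof is correct and takes essentially the same route as the paper's: the identification $\Ext_E^*(M\lotimes_E N,k)\cong\Ext_E^*(M,k)\otimes_k\Ext_E^*(N,k)$ over the diagonal, the finitely generated $\cob S$-submodule trick from \cref{l:support} giving $\V_E(M\lotimes_E N)=\supp_{\cob S}T$, truncation at degree $s+t$, and passage to the homology modules $\Tor^E_i(M,N)$. The only differences are cosmetic: the paper works with an explicit soft truncation $F'$ of a semifree resolution and the fact that $F'$ lies in the thick subcategory generated by $\h(F')$, where you use the truncation triangle and d\'evissage along the homology filtration (and a slightly different, equally valid, generating subspace for $T$).
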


\begin{proof}
By \cref{p:derivedtensor} and \cref{l:passage} one may identify  $ \Ext_E^*(M\lotimes_E N,k)$ with 
\[
\Ext_E^*(M,k)\otimes_k\Ext_E^*(N,k)
\]
viewed as a graded $\cob S$-module via restriction along the diagonal map \cref{polynomialdiagonal}. 
Let $T$ denote its graded $\cob S$-submodule of $\Ext_E^*(M\lotimes_E N,k)$ generated by 
\[
\bigoplus_{i+j\leqslant u}\Ext_E^i(M,k)\otimes_k \Ext_E^j(N,k)
\]
where $u=s+t$. The $(\cob S\otimes_k \cob S)$-module generated by $T$ is $ \Ext_E^*(M\lotimes_E N,k)$, so arguing as in the proof of \cref{l:support}, one gets an equality
\begin{equation}
\label{e:suppT}
\V_E(M\lotimes_E N)=\supp_{\cob S} T\,.
\end{equation}

Fix a semifree resolution $F\xra{\simeq} M\lotimes_E N$ over $E$, and let $F'$ be the soft truncation of $F$ in lower degrees at most $u$. Thus there is morphism of dg $E$-modules $\tau\colon F\to F'$ with the property that 
\[
\tau_{\leqslant u}\colon F_{\leqslant u}\to F'_{\leqslant u} 
\] 
is the identity map.  Hence
\[
\Ext(\tau,k)\colon \Ext_E^*(F',k)\to \Ext_E^*(M\lotimes_E N,k)
\] 
is an isomorphism in upper degrees at most $u$. In particular, under the identification discussed above
\[
T\subseteq \Image\left(\Ext_E^*(F',k)\xra{\Ext(\tau,k)} \Ext_E^*(M\lotimes_E N,k)\right)
\] 
and hence one has an inclusion
\[
\supp_{\cob S} T\subseteq \supp_{\cob S}\Ext_E^*(F',k)\,.
\] 
Since $F'$ has bounded homology, it is in the thick subcategory of $\dcat{E}$ generated by $\h(F')$ regarded as dg $E$-module via the augmentation $E\to \h_0(E)$. Thus
\[
\supp_{\cob S} T\subseteq \supp_{\cob S}\Ext_E\left(\oplus_{i\leqslant u} \Tor^E_i(M,N),k\right)=\bigcup_{i\leqslant u} \V_E(\Tor^E_i(M,N))
\]
 where for the first containment we are also using the equality 
 \[
 \h(F')=\bigoplus_{i\leqslant u} \Tor^E_i(M,N)\,.
\]
Combining this with \cref{e:suppT} finishes the proof. 
\end{proof}

\begin{remark}
\cref{t:second} implies that when $R$ is regular and  $M\lotimes_E N$ has finitely generated homology over $R$, the complexity of $M\lotimes_E N$, in the sense of \cite[Section~3]{Avramov:1989},  is bounded above by the maximum of the complexities of $\Tor^E_i(M,N)$ for $i\leq s+t$, where $s$ and $t$ are from \cref{t:second}.
\end{remark}

%\subsection*{Data availability statement}
%Data sharing not applicable to this article as no datasets were generated or analyzed during the current study.

%%%%%%%%%%%%%%%%%%%%%%%%%%%%%%%%%%%%%%%%%%%%%%%%%%%%%%%%%%%
%%%%%%%%%%%%%%%%%%%%%%%%%%%%%%%%%%%%%%%%%%%%%%%%%%%%%%%%%%%
\bibliographystyle{amsplain}
\bibliography{refs}

\end{document}